\newtheorem{assumption}{Assumption}
\newtheorem*{remark}{Remark}
\title{On stochastic optimization methods for \\ Monte Carlo least-squares problems\thanks{Submitted on April 26th, 2018.
}}
\author{Gustavo T. Pfeiffer\thanks{Institute of Industrial Science, the University of Tokyo, Tokyo, Japan (\email{pfeiffer@iis.u-tokyo.ac.jp}, \email{ysato@iis.u-tokyo.ac.jp}).} \and Yoichi Sato\footnotemark[2]}
\newcommand{\eq}[1]{\begin{equation}#1\end{equation}}
\newcommand{\topic}[1]{\subsubsection*{#1}}
\newcommand{\smallmat}[1]{\left[\begin{smallmatrix}#1\end{smallmatrix}\right]}
\newcommand\pperp{\protect\mathpalette{\protect\independenT}{\perp}}
\def\independenT#1#2{\mathrel{\rlap{$#1#2$}\mkern2mu{#1#2}}}
\newcommand{\uhat}{\hat}
\newcommand{\lqq}{\begin{displaymath}}
\newcommand{\rqq}{\end{displaymath}}
\begin{document}

\maketitle

\begin{abstract}
This work presents stochastic optimization methods targeted at least-squares problems involving Monte Carlo integration. While the most common approach to solving these problems is to apply stochastic gradient descent (SGD) or similar methods such as AdaGrad~\cite{adagrad} and Adam~\cite{adam}, which involve estimating a stochastic gradient from a small number of Monte Carlo samples computed at each iteration, we show that for this category of problems it is possible to achieve faster asymptotic convergence rates using an \textit{increasing} number of samples per iteration instead, a strategy we call \textit{increasing precision} (IP). We then improve pre-asymptotic convergence by introducing a \textit{hybrid approach} that combines the qualities of increasing precision and otherwise ``constant'' precision, resulting in methods such as the IP-SGD hybrid and IP-AdaGrad hybrid, essentially by modifying their gradient estimators to have an equivalent effect to increasing precision. Finally, we observe that, in some problems, incorporating a \textit{Gauss-Newton preconditioner} to the IP-SGD hybrid method can provide much better convergence than employing a Quasi-Newton approach or covariance-preconditioning as in AdaGrad or Adam.
\end{abstract}

\begin{AMS}
62L20, 
90C30, 
65C05 
\end{AMS}

\section{Introduction}
\label{sec:intro}
This work focuses on solving problems of the form
\eq{\text{minimize } f(x) = \frac12||Q(x)||^2\label{eq:objfun}}
where $Q:\mathbb{R}^n\rightarrow \mathbb{R}^m$ cannot be computed exactly in each point $x$, but rather, it can be approximated using pseudorandom numbers --- More specifically, we assume that for any point $x$ of the domain we can compute unbiased estimators $\uhat Q$ and $\uhat J$ for the values of $Q(x)$ and $J(x)=\partial_xQ(x)$, respectively.

For example, that is the case when one has $Q(x) = R(x) - \bar R$, where $R:\mathbb{R}^n\rightarrow \mathbb{R}^m$ is an integral of the form
\eq{R(x) = \int_\Omega L(x,y) dy,\label{eq:omega}}
for some high-dimensional space $\Omega$; while $\bar R \in \mathbb{R}^m$ are observed data. In this case, we can estimate $R(x)$ using Monte Carlo integration, by randomly sampling $\Omega$ following some distribution $\text{pdf}[Y|x]$:
\eq{\uhat R := \frac1N\sum_{i=1}^{N} \frac{L(x,Y_i)}{\text{pdf}[Y_i|x]},\label{eq:montecarlo}}
which will satisfy $E[\uhat R|x] = R(x)$. Meanwhile, an unbiased estimator of the derivative (Jacobian) of $R$ can be computed in the same fashion:
\eq{\widehat{\partial_x R} := \frac1N\sum_{i=1}^{N} \frac{\partial_xL(x,Y_i)}{\text{pdf}[Y_i|x]},\label{eq:jacobian}}
and then we can set $\uhat Q = \uhat R - \bar R$ and $\uhat J = \widehat{\partial_x R}$.

Problems of this kind recurrently appear in areas such as physically-based computer graphics~\cite{pramook,zhao,pfeiffer2012}, molecular physics~\cite{hamiltonian-newton,dna}, nuclear physics~\cite{nuclear} and heat transfer~\cite{heat-transfer}, as inverse problems whose direct counterpart must be solved using Monte Carlo integration.
Throughout this article we refer to this category of problems as \textit{Monte Carlo least-squares}
(MCLS) problems.

Because $f$ and $\nabla f$ cannot be measured exactly in each point, but only estimated using pseudorandom numbers, standard optimization techniques such as Newton's method do not behave well in these problems --- there is no convergence and line search procedures misbehave. Rather, one must use stochastic optimization methods such as the stochastic gradient descent method~\cite{bottou2010}.

In the past decade, stochastic optimization methods have received increasing interest due to their applicability in \textit{large scale learning} (LSL) problems such as training deep neural networks; however, these problems are very different in nature from the MCLS problems we are interested in. Most notably, LSL describes the objective function as a large (but finite) sum of simpler functions, where the noise of its gradient estimator resides in random sampling this very large sum\footnote{In LSL, stochastic optimization solutions model the objective function as a large sum of functions $f(x) = \frac1m\sum_{i=1}^m f_i(x)$ and estimate the gradient typically as $\widehat{\nabla f} = \nabla f_i(x)$ for random  $i$, or as a sum of a few randomly selected $\nabla f_i$ (``mini-batch''). In least-squares problems, that is equivalent to randomly selecting one or a few rows of $Q(x)$ when computing $||Q(x)||^2$, which is useful when the height $m$ of this vector $Q(x)$ is very high.}; while in MCLS, noise originates in generating the $\hat Q$ and $\hat J$ estimators described in the beginning of this section. Thus, MCLS has particularities that do not apply to LSL, and conversely, many of the recently proposed methods for LSL are not applicable to MCLS (e.g.~\cite{sag,fixed1,sohl-dickstein}).

The fundamental method in derivative-based stochastic optimization, stochastic gradient descent (SGD), takes an update rule of the form:
\lqq x_{k+1} = x_k - A_k \widehat{\nabla f}(x_k) \rqq
where $\widehat{\nabla f}(x)$ is an unbiased estimator of $\nabla f(x)$, and $A_k \in \mathbb{R}^{n\times n}$ is a predefined sequence of matrices, typically in the form $A_k = a_k D$, where $a_k\in\mathbb{R}$ is the \textit{step size sequence} (also called \textit{learning rates}), and $D\in\mathbb{R}^{n\times n}$ is the \textit{preconditioner}, usually $D=I$. It has been proven~\cite{efficiency} that when $A_k\sim S^{-1}/k$, where $S=\nabla^2 f(x^*)$ and $x^*$ is the global minimum point, SGD is an asymptotically efficient method (i.e. has optimal convergence rate) with $E[||x_k-x^*||^2] \sim \frac{\text{tr}\{S^{-1}\Sigma^2S^{-1}\}}{k}$ as $k\rightarrow\infty$, where $\Sigma^2 = \text{Var}[\widehat{\nabla f}(x^*)]$. Worth noting, this is only possible when $S$ is known, as it is necessary to set $A_k\sim S^{-1}/k$. We refer to this configuration of SGD as \textit{Hessian-preconditioned SGD}, as $D$ is set to the inverse Hessian matrix. By asymptotically efficient, it means that no other method could possibly have a better convergence rate, assuming the input are the stochastic gradients measured at each point $x_k$.

However, in MCLS, we assume we are given not the gradient samples $\widehat{\nabla f}(x_k)$ at each point $x_k$, but the residual and Jacobian estimators $\uhat Q$ and $\uhat J$, respectively, that are used to estimate the gradient (as in Section~\ref{sec:grad}), which intrinsically contain more information than only the gradient. This implies that we may actually obtain a better asymptotic performance\footnote{By asymptotic performance, we mean the asymptotic decay of the expected square error $E[||x_k-x^*||^2]$ with respect to the total computation time/cost.} than Hessian-preconditioned SGD. We show that under a few assumptions regarding sampling cost (Section~\ref{sec:assumptions}), performance can be improved by gradually increasing the number of samples $(\uhat Q,\uhat J)$ used to compute the gradient at each iteration, a strategy we call \textit{increasing precision} (IP) (Section~\ref{sec:ip}). From IP we then derive a \textit{hybrid approach} (Section~\ref{sec:hybrid}), which combines the qualities of increasing precision (IP) and ``constant'' precision as in SGD. Although the hybrid approach is not as well understood in theory, the resulting methods such as the IP-SGD hybrid and the IP-AdaGrad hybrid methods perform remarkably well in practice. Finally, we observe by experimental analysis how incorporating a \textit{Gauss-Newton preconditioner} to the IP-SGD hybrid can be highly beneficial on MCLS (Section~\ref{sec:gn}), in comparison to existing Quasi-Newton approaches or covariance preconditioning as in AdaGrad~\cite{adagrad} or Adam~\cite{adam}. The theoretic convergence analysis of the proposed methods is presented in a separate section (Section~\ref{sec:analysis}), which proves the convergence speed of IP and a Ruppert-Polyak averaged version of it for the strongly convex case, and provides a more limited theoretical support for the hybrid approach and the stochastic Gauss-Newton methods. Numerical experiments are presented throughout the article (Sections \ref{sec:ip-exp}, \ref{sec:hybrid-exp} and \ref{sec:gn-exp}) whenever new methods are introduced.

\subsection{Related work}
\label{sec:related}

\topic{The use of increasing precision}
The idea of gradually increasing the number of samples used in computing the gradient is not new, as was used for example in \cite{SA-GN, nuclear}. However, our work is original in analyzing how the properties of MCLS problems affect asymptotic behavior when this kind of method is used. The incorporation of Ruppert-Polyak averaging to IP, and the hybrid approach we propose are also assumed to be novel.

\topic{Methods for MCLS} While the application of stochastic optimization to MCLS is recurrent in the literature~\cite{pramook,zhao,pfeiffer2012,hamiltonian-newton,dna}, most works simply apply SGD or some heuristic method, without a proper theoretical analysis of its convergence. We believe this is the first time that methods exploring the particularities of MCLS are proposed.

\topic{Stochastic Gauss-Newton methods} Gauss-Newton approaches to stochastic optimization are rare in the literature. In 1985, Ruppert~\cite{SA-GN} proposed a method that resembles Gauss-Newton to solve systems of equations (that is, with $m=n$ on Equation~\ref{eq:objfun}). A recent method~\cite{kalman-GN} was proposed for the linear case (i.e. linear regression), but is specific to LSL. A few other approaches of stochastic Gauss-Newton and Newton-Raphson have been proposed for particular Monte Carlo applications~\cite{hamiltonian-newton,pfeiffer2012}, with however little theoretical support.

Stochastic variants of Quasi-Newton, on the other hand, are far more common~\cite{wang2017, bordes2009, schraudolph2007, Wei}, although applying Quasi-Newton in a stochastic optimization context faces many complications that Gauss-Newton does not, as will be discussed later (Section~\ref{sec:gn}).

\section{Preliminaries}
In this section, we introduce our notation, assumptions, and provide a brief explanation of how SGD behaves on MCLS problems. We also review the concept of Ruppert-Polyak averaging, which will be incorporated to our methods later on.

\subsection{Remarks on notation}
We do not use a particular typesetting for vectors and random variables; although matrices are always capitalized and a ``$\uhat{\text{ }}$'' symbol always indicates an unbiased estimator, as in $\widehat{\nabla f}(x)$. ``$\widehat{\nabla f}(x)$'' per se is an abuse of notation to denote that the estimator $\widehat{\nabla f}$ is calculated in function of $x$ and satisfies $E[\widehat{\nabla f}|x] = \nabla f(x)$. ``$\pperp$'' denotes independence, and independent identically distributed (i.i.d.) variables are usually denoted with a ``$\text{ }^\prime$'' symbol, as in $X\pperp X^\prime$, or with ``$\text{ }^{(i)}$'', as in $X^{(1)} \pperp X^{(2)} \pperp X^{(3)}$. $\text{Var}[X]$, when $X$ is a vector, indicates covariance matrix: $\text{Var}[X] = E[XX^T]-E[X]E[X]^T$.

\newcommand{\Lim}[1]{\raisebox{0.5ex}{\scalebox{0.8}{$\displaystyle \lim_{#1}\;$}}}
\newcommand{\Limsup}[1]{\raisebox{0.5ex}{\scalebox{0.8}{$\displaystyle \limsup_{#1}\;$}}}
\newcommand{\Liminf}[1]{\raisebox{0.5ex}{\scalebox{0.8}{$\displaystyle \liminf_{#1}\;$}}}
We make extensive use of Bachmann-Landau symbols $o(\cdot)$, $O(\cdot)$, $\Theta(\cdot)$, $\Omega(\cdot)$, $\omega(\cdot)$, and $\sim$ to describe asymptotic behavior, with the following meanings:
\begin{center}
\begin{tabular}{c c c}
 \\ 
$f_k = o(g_k)$ & $\Leftrightarrow$ & $\Lim{k\rightarrow\infty} \frac{|f_k|}{|g_k|} = 0$ \\ 
$f_k = O(g_k)$ & $\Leftrightarrow$ & $\Limsup{k\rightarrow\infty} \frac{|f_k|}{|g_k|} < +\infty$ \\ 
$f_k = \Theta(g_k)$ & $\Leftrightarrow$ & $0 < \Lim{k\rightarrow\infty} \frac{|f_k|}{|g_k|} < +\infty$ \\ 
$f_k = \Omega(g_k)$ & $\Leftrightarrow$ & $\Liminf{k\rightarrow\infty} \frac{|f_k|}{|g_k|} > 0$ \\ 
$f_k = \omega(g_k)$ & $\Leftrightarrow$ & $\Lim{k\rightarrow\infty} \frac{|f_k|}{|g_k|} = +\infty$ \\ 
$f_k \sim g_k$ & $\Leftrightarrow$ & $\Lim{k\rightarrow\infty}\frac{f_k}{g_k} = 1$ \\
 \\ 
\end{tabular}
\end{center}

The subscript of the limits above may be different from ``$k\rightarrow\infty$'' (e.g. ``$x\rightarrow0$'') if specified by context. Whenever applied to random variables, the expressions are meant to hold with probability one (i.e., \textit{almost surely}).

\subsection{Assumptions}
\label{sec:assumptions}

Our methods assume we are able to, for any given $x$, generate a pair $(\uhat Q, \uhat J)$ that unbiasedly estimate $Q(x)$ (as defined in Equation~\ref{eq:objfun}) and $J(x) = \partial_xQ(x)$, respectively. $\uhat Q$ and $\uhat J$ from a same pair $(\uhat Q, \uhat J)$ may be correlated. For simplicity, we assume that the total computational cost is the number of pairs $(\uhat Q, \uhat J)$ computed.

This assumption implies that most of the cost resides in generating $(\uhat Q, \uhat J)$, and that generating $(\uhat Q, \uhat J)$ is not much more costly than generating $\uhat Q$ alone. This is often valid in MCLS applications (e.g. \cite{pramook,pfeiffer2012}) as $\uhat Q$ and $\uhat J$ share much of the computation. In Section~\ref{sec:limitations}, we discuss how our methods perform when these assumptions do not hold.

\subsection{Estimating the gradient}
\label{sec:grad}

It is possible to construct an unbiased estimate of the gradient $\nabla f(x) = J(x)^TQ(x)$ by taking two independent samples $(\uhat Q, \uhat J), (\uhat Q^\prime, \uhat J^\prime)$ and doing
\lqq \widehat{\nabla f} := \uhat J^T\uhat Q^\prime, \rqq
which is unbiased since $\uhat J \pperp \uhat Q^\prime$ and therefore $E[\uhat J^T \uhat Q^\prime] = E[\uhat J^T]E[\uhat Q^\prime] = \nabla f$.

A better choice of gradient estimator in this scenario is
\eq{\widehat{\nabla f} := \frac{\uhat J^T\uhat Q^\prime + \left.\uhat J^\prime\right.^T\uhat Q}2,\label{eq:gradprec2}}
which has lower variance than the previous one (see Lemma~\ref{lemma:var1} in the appendices).

Similarly, when $N$ i.i.d. pairs $(\uhat Q^{(1)}, \uhat J^{(1)}), (\uhat Q^{(2)}, \uhat J^{(2)}), ..., (\uhat Q^{(N)}, \uhat J^{(N)})$ are available, the most appropriate unbiased estimator is
\eq{\widehat{\nabla f} := \frac{1}{N(N-1)}\sum_{1\leq i\neq j \leq N} \left.\uhat J^{(i)}\right.^T\uhat Q^{(j)},\label{eq:gradprec}}
whose variance takes the form
\eq{\text{Var}[\widehat{\nabla f}] = \frac{\Sigma_A^2}N + \frac{\Sigma_B^2}{N(N-1)}\label{eq:var},}
where $\Sigma_A^2$ and $\Sigma_B^2$ are positive semidefinite matrices (see Lemma~\ref{lemma:var2} in the appendices for a proof).

\subsection{The performance of SGD in MCLS}
\label{sec:sgd}
The stochastic gradient descent (SGD) method may be applied to MCLS problems by computing, at each iteration, $N\geq 2$ i.i.d. pairs $(\uhat Q^{(i)}(x_k),\uhat J^{(i)}(x_k))$ and update $x_k$ as:
\eq{x_{k+1} = x_k - A_k\widehat{\nabla f}_N(x_k), \label{eq:sgd}}
where $\widehat{\nabla f}_N$ is the gradient estimator $\widehat{\nabla f}$ of Equation~\ref{eq:gradprec} computed from $N$ samples. 
As discussed in the introduction, Hessian-preconditioned SGD (i.e. SGD with $A_k\sim S^{-1}/k$, where $S=\nabla^2f(x^*)$ and $x^* = \arg\min_x f(x)$) is known to have a convergence rate of $E[||x_k-x^*||^2] \sim \frac{\text{tr}\{S^{-1}\text{Var}[\widehat{\nabla f}(x^*)]S^{-1}\}}k$ as $k\rightarrow\infty$. However, by Equation~\ref{eq:var}, $\text{Var}[\widehat{\nabla f}(x^*)]$ is of the form $\frac{\Sigma_A^2}N + \frac{\Sigma_B^2}{N(N-1)}$, implying that when we write performance in function of the total number of samples computed $t=Nk$, we obtain:
\eq{ E[||x_k-x^*||^2] \sim \frac{\text{tr}\{S^{-1}\text{Var}[\widehat{\nabla f}(x^*)]S^{-1}\}}{t/N} = \frac{\text{tr}\{S^{-1}(\Sigma_A^2+\frac{\Sigma_B^2}{N-1})S^{-1}\}}t, \label{eq:perf-sgd}}
implying asymptotic performance changes according to $N$, being fastest as $N\rightarrow\infty$.

\subsection{Ruppert-Polyak averaging}
A difficulty in applying SGD is the fact that it requires the knowledge of the Hessian $S$ in order to optimally set the sequence $A_k$. A workaround for that is to employ Ruppert-Polyak averaging, which consists of taking asymptotically longer steps than ideal, but returning the average of all iterates:
\lqq \tilde x_{k+1} = \tilde x_k - A_k\widehat{\nabla f}_N(\tilde x_k), \text{ }\text{ }\text{ } x_{k+1} = \frac{\sum_{i=1}^k \tilde x_{i+1}}{k}, \rqq
where $A_k \sim D/k^\alpha$, with $\frac12 < \alpha < 1$ (instead of the $A_k = \Theta(1/k)$ step size decay from Hessian-preconditioned SGD), and $D \in \mathbb{R}^{n\times n}$ symmetric positive definite.

Ruppert~\cite{avg2} and Polyak and Juditsky~\cite{avg1} showed that this update rule provides the same asymptotic behavior as Hessian-preconditioned SGD regardless of the choice of $\alpha$ and $D$, thus not requiring knowledge of the Hessian matrix. In spite of this remarkable theoretical guarantee, Ruppert-Polyak averaging has been reported to take long to reach its asymptotic behavior in practice~\cite{SPSA,bottou2010,xu-avg}, which may suggest the choice of $D$ might heavily impact pre-asymptotic performance.

We will later discuss how the concept of Ruppert-Polyak averaging can be incorporated to our methods.

\section{The increasing precision method}
\label{sec:ip}

The observation that SGD converges faster as $N$ approaches infinity motivates the idea of replacing the constant $N$ of Equation~\ref{eq:sgd} with a non-decreasing sequence $N_k$:
\eq{x_{k+1} = x_k - A_k\widehat{\nabla f}_{N_k}(x_k), \label{eq:ip}}
which we call the \textit{increasing precision} method (IP). Thus, IP may be thought of as a generalization of SGD, where in SGD $N_k$ is constant with respect to $k$.

While in SGD the optimal choice for $A_k$ is $A_k \sim S^{-1}/k$, in IP (\textit{Hessian-preconditioned IP}) it is $A_k \sim N_kS^{-1}/\sum_{i=1}^k N_i$. In Section~\ref{sec:theorems}, we prove that this is true when $N_k$ grows in polynomial rate, provided that $f$ is strongly convex and the distribution of $\widehat{\nabla f}$ has a sufficiently high number of finite moments (Theorem~\ref{thm:ip}). In both Hessian-preconditioned SGD and IP, the convergence rate is
\lqq E[||x_k-x^*||^2] \sim \frac{\text{tr}\left\{S^{-1}\left(\lim_{j\rightarrow\infty}N_j\text{Var}\left[\widehat{\nabla f}_{N_j}\middle| x_j\right]\right)S^{-1}\right\}}{t_k}, \text{ }\text{ (as } k\rightarrow\infty\text{)}\rqq
where $t_k = \sum_{j=1}^k N_j$. Thus, while in SGD the limit term in the expression above converges to $\Sigma_A^2+\frac{\Sigma_B^2}{N-1}$, in IP it converges to $\Sigma_A^2$, implying Hessian-preconditioned IP outperforms any possible configuration of Hessian-preconditioned SGD, since $\Sigma_B^2$ is positive semidefinite.

IP may also be implemented with Ruppert-Polyak averaging, by returning a weighted average of all iterations:
\lqq \tilde x_{k+1} = \tilde x_k - A_k\widehat{\nabla f}_{N_k}(\tilde x_k), \text{ }\text{ }\text{ }  x_{k+1} = \frac{\sum_{i=1}^k N_i \tilde x_{i+1}}{\sum_{i=1}^k N_i}, \rqq
where $N_k = \Theta(k^q)$, for some $q>0$, and $A_k = \frac D{(k+c)^\alpha}$, with $c\geq 0$, $D\in\mathbb{R}^{n\times n}$ symmetric positive definite, and $\alpha$ satisfying $\max\left\{0,\frac{1-q}2\right\}<\alpha<1$.

Just as Ruppert-Polyak averaged SGD (aSGD) has the same asymptotic performance as Hessian-preconditioned SGD, this averaged IP (aIP) method has the same asymptotic performance as Hessian-preconditioned IP, therefore outperforming aSGD. See Theorem~\ref{thm:ipa} in Section~\ref{sec:theorems} for a proof and the detailed conditions in which this property holds.

\subsection{Experiments}
\label{sec:ip-exp}

\tsubimages[]{Comparison of SGD and IP on two different problems (see definition in Appendix~\ref{sec:problems}). The graph is a log-log plot of the average squared error ($E[||x_k-x^*||^2]$) against the total computational cost $t_k = \sum_{i=1}^kN_k$. For each configuration, the darker line in the middle shows the average of 1000 independent optimization runs, while the translucent area around the line shows an error margin of 3 standard deviations of the distribution of the mean.}{n1}{
  \begin{center}
    \subimage[Problem \#1.]{.45}{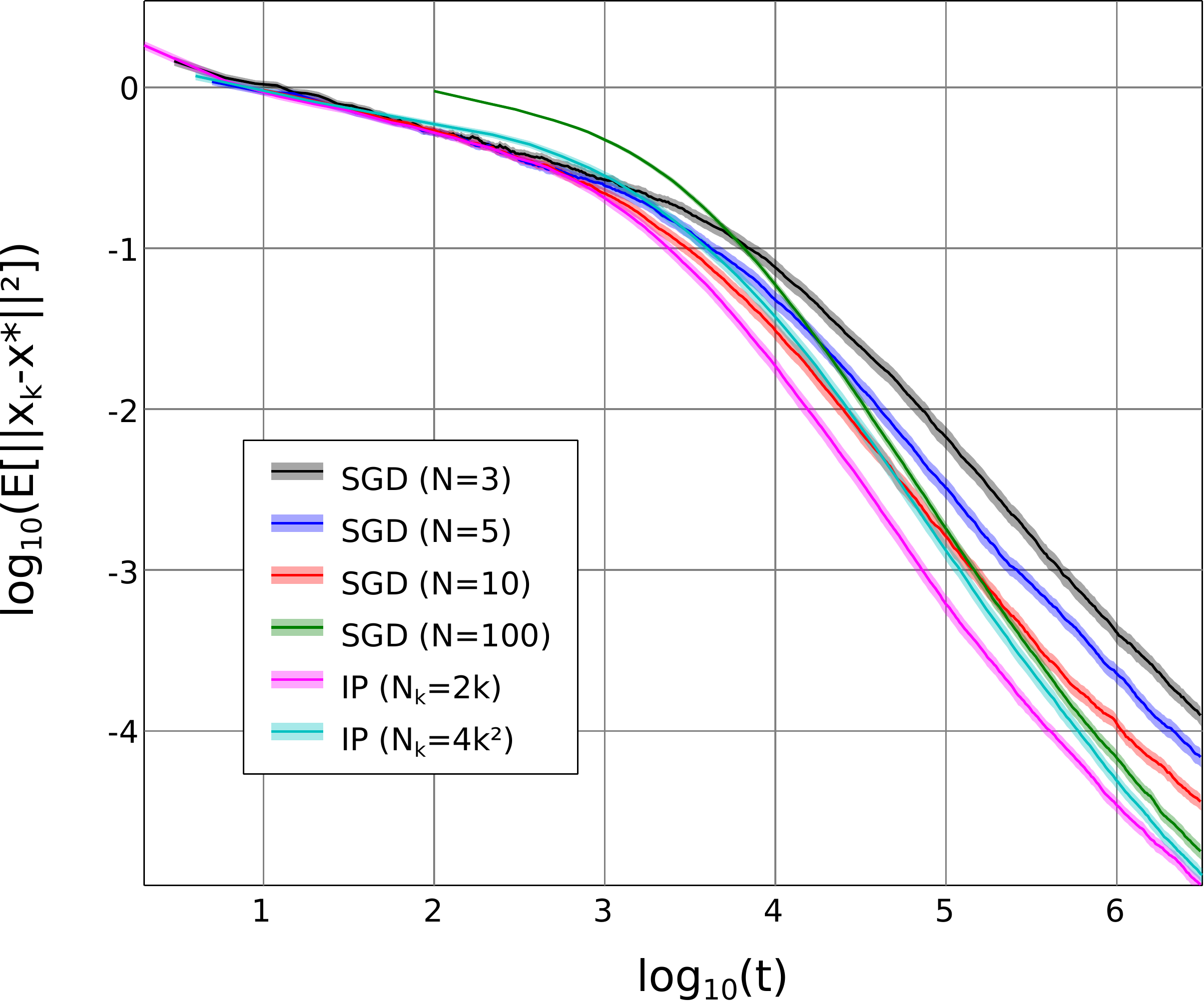}
    \subimage[Problem \#2, $n=10,m=20$.]{.45}{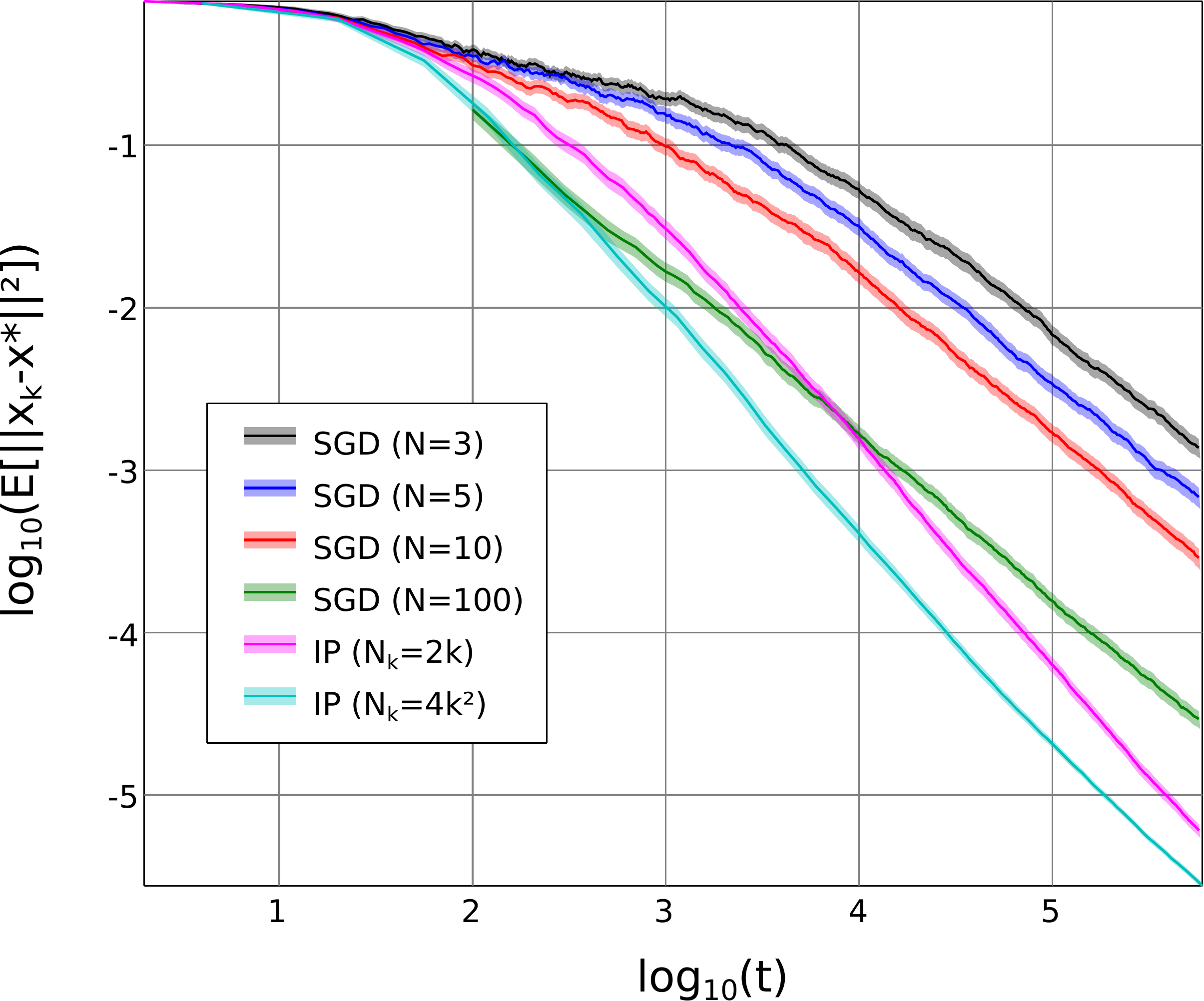}
  \end{center}
}

We compare SGD and IP on two simple problems, described in Appendix~\ref{sec:problems}, by taking the average value of $||x_k-x^*||^2$ among 1000 independent optimization runs. Both methods were preconditioned with a Gauss-Newton approximation of the Hessian at $x^*$. On Figure~\ref{fig:n1}, we observe that SGD converges to different asymptotes depending on $N$, with higher $N$ converging to lower asymptotes, while IP converges to a lower asymptote corresponding to $N=\infty$.

Regarding the pre-asymptotic phase, Figures \ref{fig:n1}(a) and \ref{fig:n1}(b) show different behaviors. On (a), we observe that if $N$ is as high as 100 on SGD, the pre-asymptotic phase slows down, while on (b) this is not observed. Meanwhile, IP is able to provide good performance on both the asymptotic and pre-asymptotic domains, although pre-asymptotic behavior appears to be sensitive to the choice of $N_k$.

\section{The hybrid approach}
\label{sec:hybrid}

As seen in Section~\ref{sec:ip-exp}, although IP provides good theoretical guarantees regarding asymptotic behavior, its pre-asymptotic phase can be very sensitive to the choice of the sequence of number of samples $N_k$. Small $N$ is usually better in the beginning of the optimization, when $x_k$ is far from the optimum $x^*$, so that $x_k$ can quickly approach the neighborhood of $x^*$; while as $x_k$ becomes closer to $x^*$, higher $N$ starts becoming more advantageous due to the reduction of the $\Sigma_B^2$ term of the gradient variance. However, it is difficult to determine this changing point. Would it be possible to simultaneously implement the advantages of both small $N$ and high $N$?

This motivates the \textit{hybrid approach}, which uses only $N=1$ sample per iteration, but combines the observations of the current and previous iterations in order to achieve the same effect of increasing precision of eliminating the $\Sigma_B^2$ term. The hybrid approach consists of replacing the unbiased gradient estimator of Equation~\ref{eq:gradprec} with the following biased estimator:
\eq{g_t = \frac{\sum_{i=1}^{t-1}q_i \left(\uhat J_i^T\uhat Q_t + \uhat J_t^T\uhat Q_i\right)}{2 \sum_{i=1}^{t-1} q_i}\label{eq:hybrid1}}
where $\uhat J_t = \uhat J(x_t), \uhat Q_t = \uhat Q(x_t)$ (i.e., using only $N=1$ sample per iteration), and $q_i$ is a predefined increasing sequence of positive numbers. In addition, on the first iteration ($t=1$), we force $g_t=0$ in order to avoid a division by zero. When we apply this gradient estimator to SGD, i.e., the update rule 
\eq{x_{t+1} = x_t - A_tg_t, \label{eq:ip-sgd-hybrid}}
with $g_t$ following Equation~\ref{eq:hybrid1}, we call this the \textit{IP-SGD hybrid} method. For an explanation of how this connects to the IP method, see Appendix~\ref{sec:hybconn}.

As this approach is essentially only changing the gradient estimator of the method, it can be also applied without difficulties to any stochastic optimization method that takes as input one gradient measurement at each iteration, such as AdaGrad~\cite{adagrad}, Adam~\cite{adam}, or averaged SGD, resulting in what we call, respectively, the IP-AdaGrad hybrid, the IP-Adam hybrid, and the averaged IP-SGD hybrid. However, the optimal choice of $q_i$ may differ for each method, as will be discussed later in this section.

Implementation-wise, it may be numerically more stable, particularly when $q_i$ grows very fast (e.g. superpolynomial growth), to rewrite Equation~\ref{eq:hybrid1} using \textit{forget rates} $\zeta_i = \frac{q_i}{\sum_{j=1}^i q_j}$:
\eq{g_t = \frac{\bar J_t\uhat Q_t + \uhat J_t^T\bar Q_t}2, \text{ }\text{ }\text{ }\text{ }\text{ }\text{ } \left\{\begin{matrix}\bar J_{t+1} = (1-\zeta_t)\bar J_t + \zeta_t \uhat J_t\\\bar Q_{t+1} = (1-\zeta_t)\bar Q_t + \zeta_t \uhat Q_t\end{matrix}\right.,\label{eq:fuzzy}}
where $\zeta_1=1$, $0 < \zeta_t < 1$ (for $t>1$), and the constraint $q_{t+1}>q_t$ implies $\zeta_{t+1}^{-1} < \zeta_t^{-1} + 1$. Also, $\bar Q_1=0$ and $\bar J_1=0$.

Since the hybrid approach does not use an unbiased gradient estimator, the convergence guarantees we had for IP and SGD are not valid anymore, and providing a theoretic analysis of the hybrid method can be challenging. On Section~\ref{sec:asym-hybrid} we analyze a simplified case, which provides some constraints on the choice of forget rates $\zeta_t$ for the IP-SGD hybrid and its averaged counterpart. For the IP-SGD hybrid, any forget rate satisfying $\zeta_t=o(1)$ and $\zeta_t=\omega(1/t)$ should provide good convergence, while for the averaged IP-SGD hybrid, it suffices that $\zeta_t=o(1)$, with a generally faster pre-asymptotic phase when $\zeta_t = \Theta(1/t)$. The optimal choice of $A_t$ is the same as in the original SGD and aSGD methods. We refrained from providing analyses for the IP-AdaGrad and IP-Adam hybrids.

Although not well understood in theory, the hybrid approach performs remarkably well in practice, with the IP-SGD hybrid exhibiting better performance than both IP and SGD, for example, as will be shown in the following subsection.

\subsection{Experiments}
\label{sec:hybrid-exp}

\tsubimages[]{Comparison of forget rates for the IP-SGD hybrid method and its averaged version on Problem \#1, similarly to Figure~\ref{fig:n1}. As we may observe, the forget rates do not significantly impact performance, as long as $\zeta_t = o(1)$. All images show the average of 1000 optimization runs. All cases were preconditioned by a Gauss-Newton approximation of the Hessian at $x^*$, and with $\eta=1$.}{n3}{
  \begin{center}
    \subimage[IP-SGD hybrid, $\zeta_t = \frac{C}{1-(1-C)^t} \sim C$.]{.45}{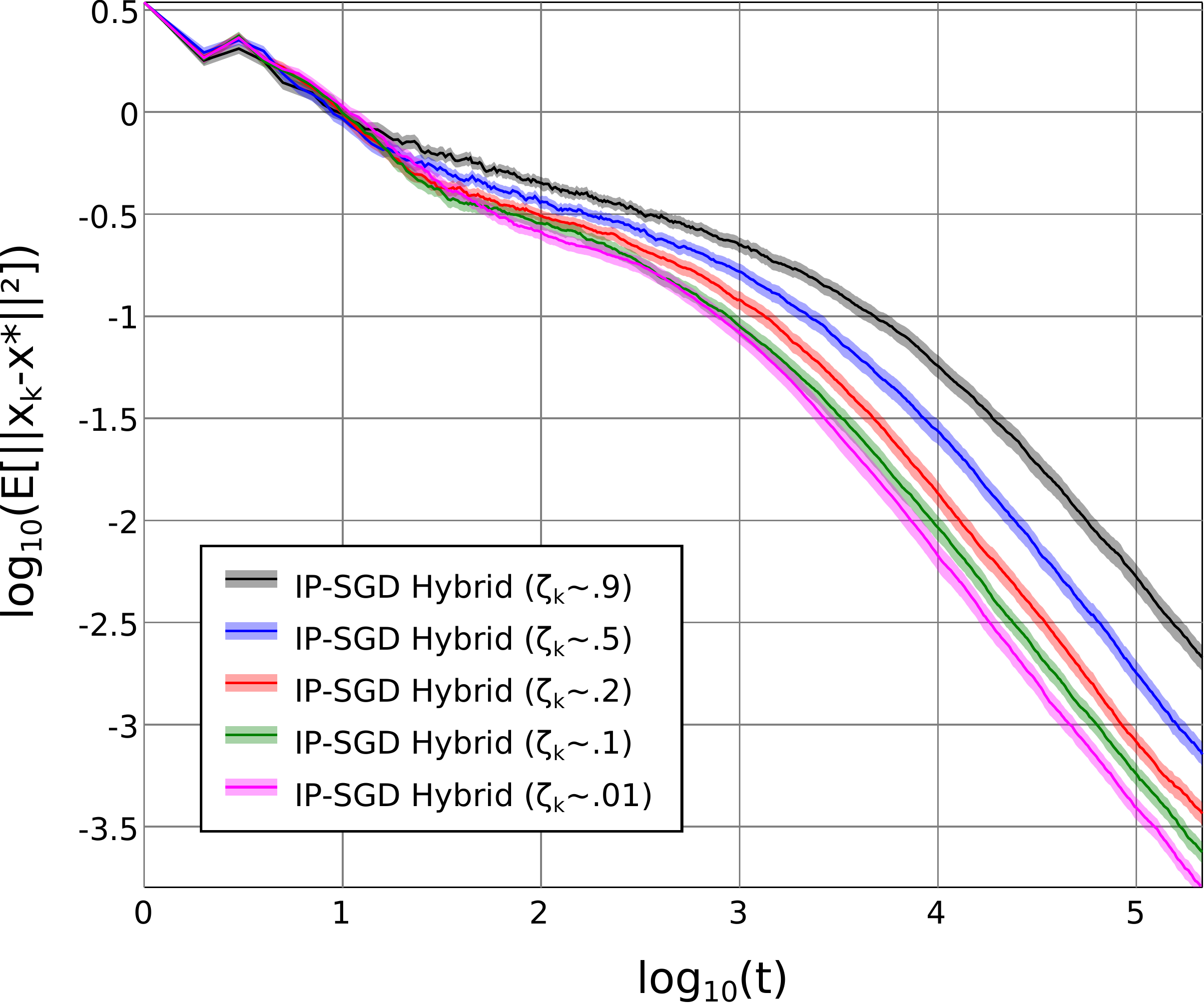}
    \subimage[IP-SGD hybrid, $\zeta_t=\Theta(1/t^a)$, $0<a<1$.]{.45}{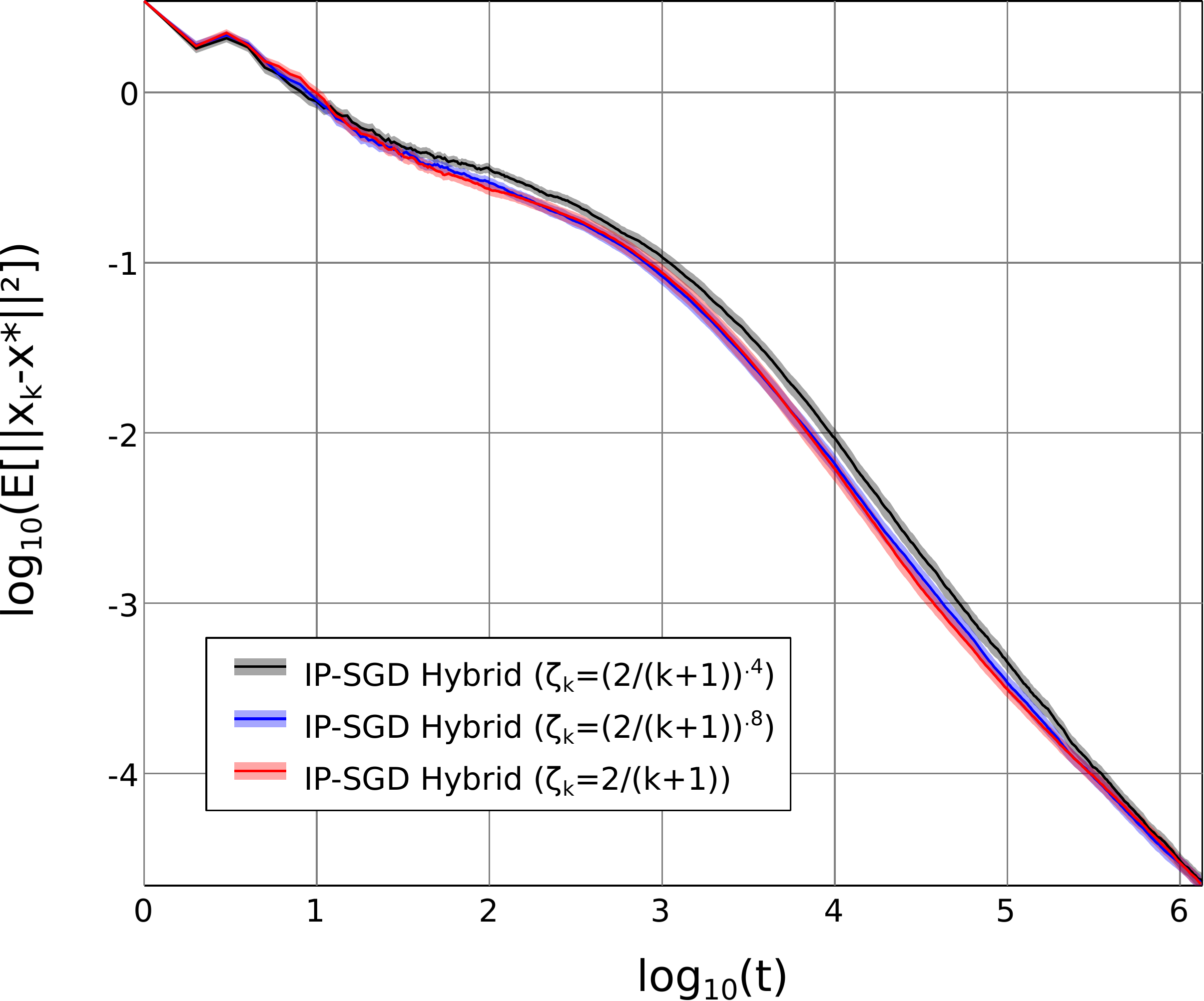}
  \end{center}
  \begin{center}
    \subimage[IP-SGD hybrid, $\zeta_t=\Theta(1/t)$.]{.45}{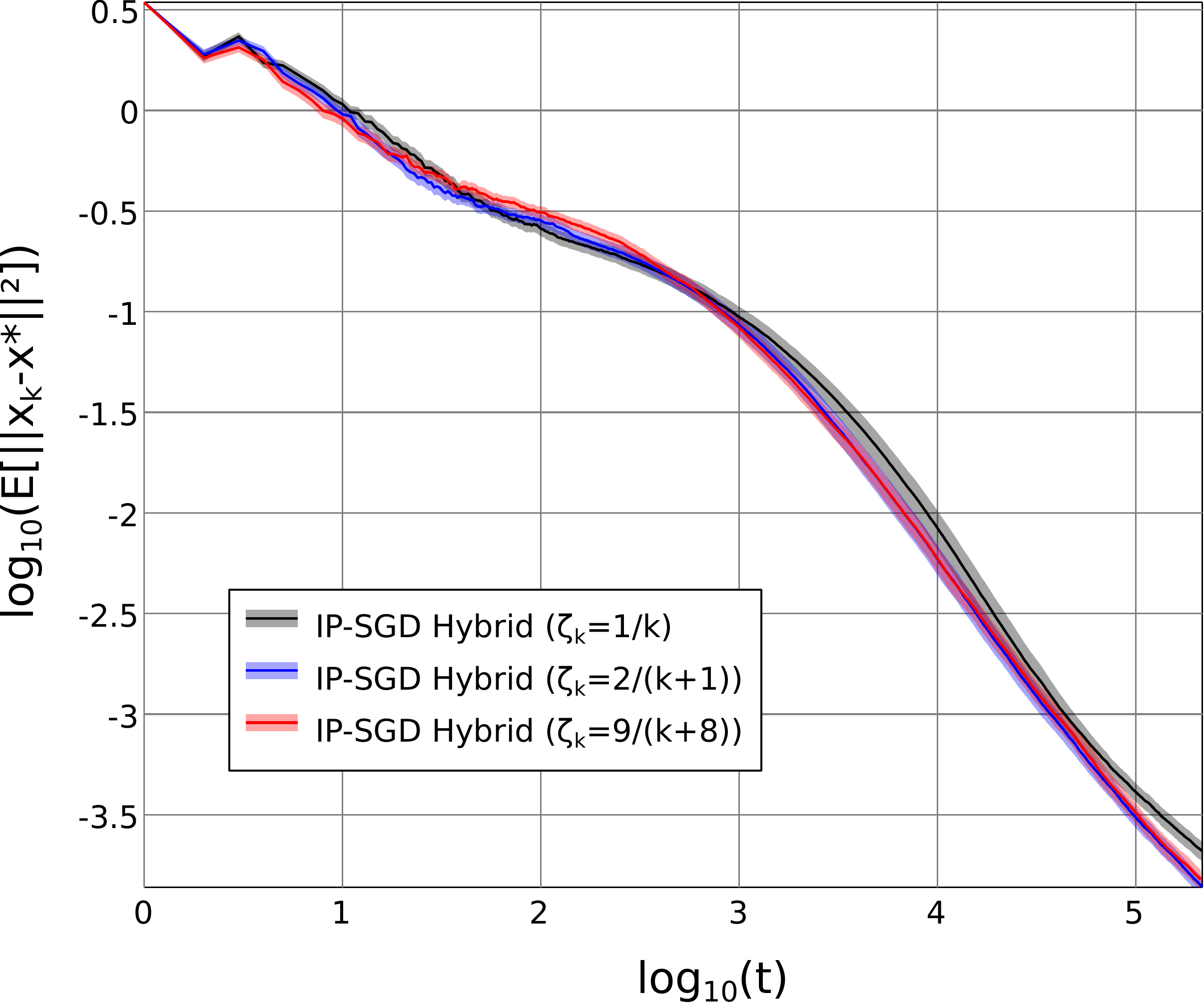}
    \subimage[Averaged IP-SGD hybrid, $\zeta_t=\Theta(1/t)$.]{.45}{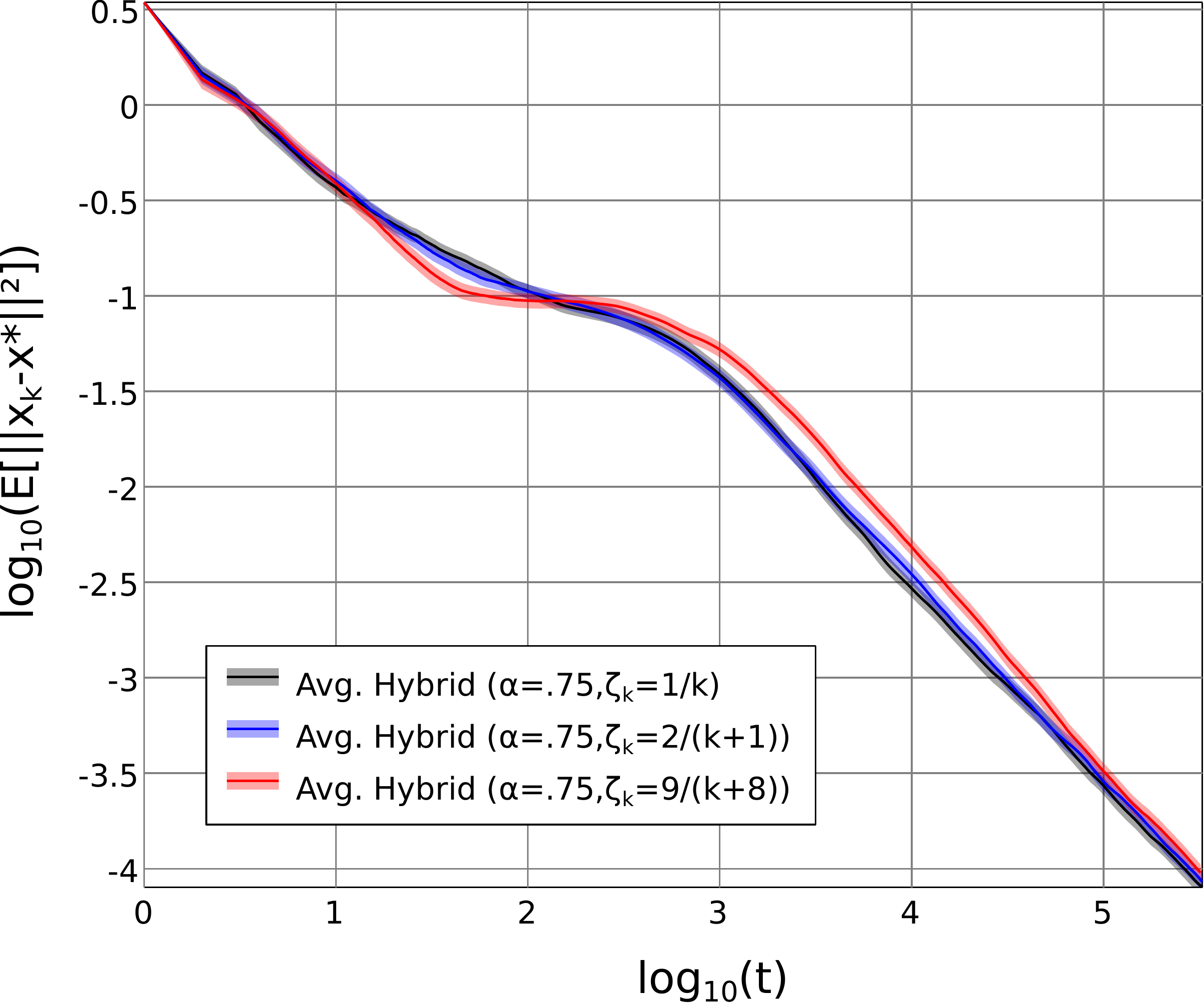}
  \end{center}
}

First, on Figure~\ref{fig:n3}, we assess the impact of the choice of forget rate on the IP-SGD hybrid method and its averaged version. Apparently, as long as $\zeta_t = o(1)$, forget rates seem to have little impact on performance. While our theoretical analysis (Section~\ref{sec:asym-hybrid}) recommends a learning rate between $\zeta_t = o(1)$ and $\zeta_t = \omega(1/t)$ for the non-averaged case, it seems pre-asymptotic performance is better for $\zeta_t = \Theta(1/t)$, so we may observe slightly better results when $\zeta_t \sim (1+p)/t$ for a not very high value of $p$, such as $p=2$.

\tsubimages[]{Comparison of SGD, IP, the IP-SGD hybrid, and their averaged counterparts on two different problems (see definition in Appendix~\ref{sec:problems}), similarly to Figure~\ref{fig:n1}. In the legend, ``Hybrid'' and ``aHybrid'' refer to the IP-SGD hybrid and its averaged version, respectively. In both problems we show the average of 1000 independent runs.}{n2}{
  \begin{center}
    \subimage[Problem \#1]{.45}{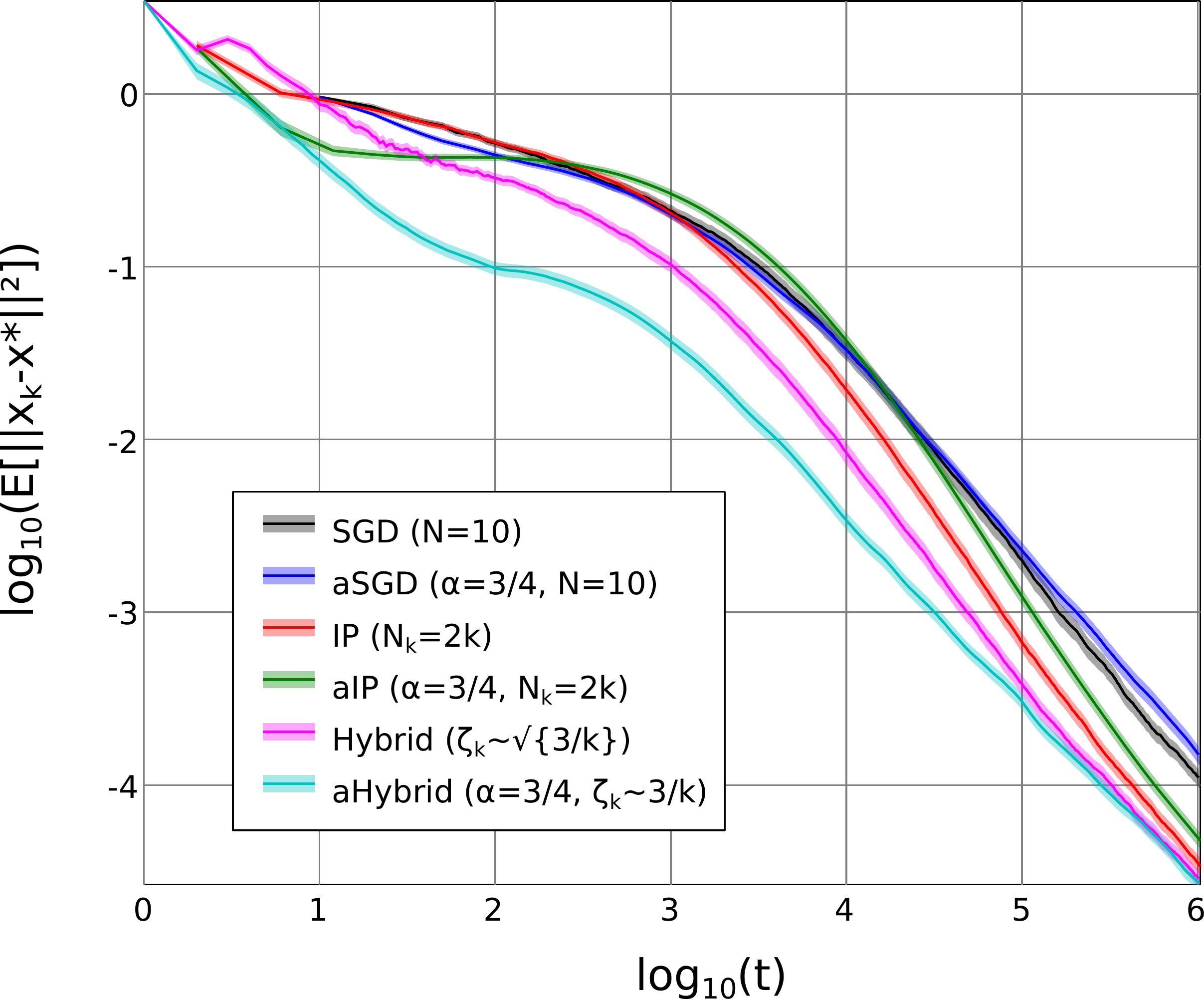}
    \subimage[Problem \#2, with $n=4$, $m=5$]{.45}{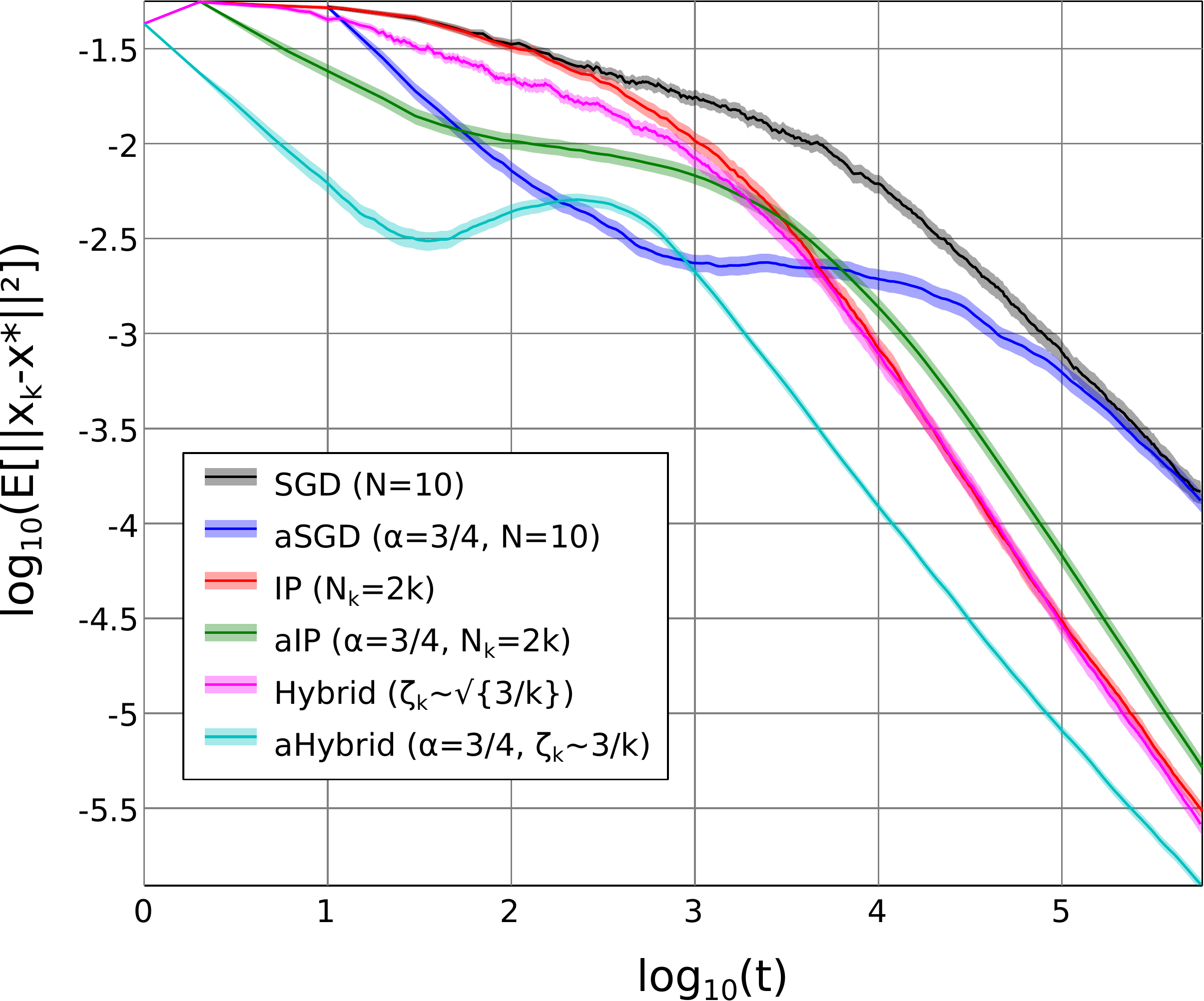}
  \end{center}
}

Figure~\ref{fig:n2} compares the IP-SGD hybrid to IP and SGD, as well as their averaged counterparts. All methods were preconditioned by a Gauss-Newton approximation of the Hessian at $x^*$. We observe that the IP-SGD hybrid provides better pre-asymptotic performance than SGD and IP, and similarly, the averaged hybrid provides better pre-asymptotic performance than averaged SGD and averaged IP. We also observe that SGD and aSGD appear to converge to the same asymptote, while the other four methods appear to converge to another lower asymptote. On (b), there is a more significant difference between the curves of these latter methods, most likely because the methods have not yet fully reached the asymptotic phase, and will eventually converge to the same asymptote later on.

\tsubimages[]{Comparison of AdaGrad, Adam and the IP-AdaGrad and IP-Adam hybrids on Problem \#1, showing the average of 1000 optimization runs, similarly to Figure~\ref{fig:n1}. The hybrid methods employed a forget rate of $\zeta_k=\frac{3}{k+2}$. In the captions, $\eta$ and $\alpha_k$ refer to learning rates following the notation of the original papers~\cite{adagrad,adam}. For Adam, we left the other hyperparameters in their default values ($\beta_1=.9, \beta_2=.999, \epsilon=10^{-8}$). AdaGrad was used in diagonal form.}{hybridcov}{
  \begin{center}
    \subimage[AdaGrad, $\eta=.05$]{.45}{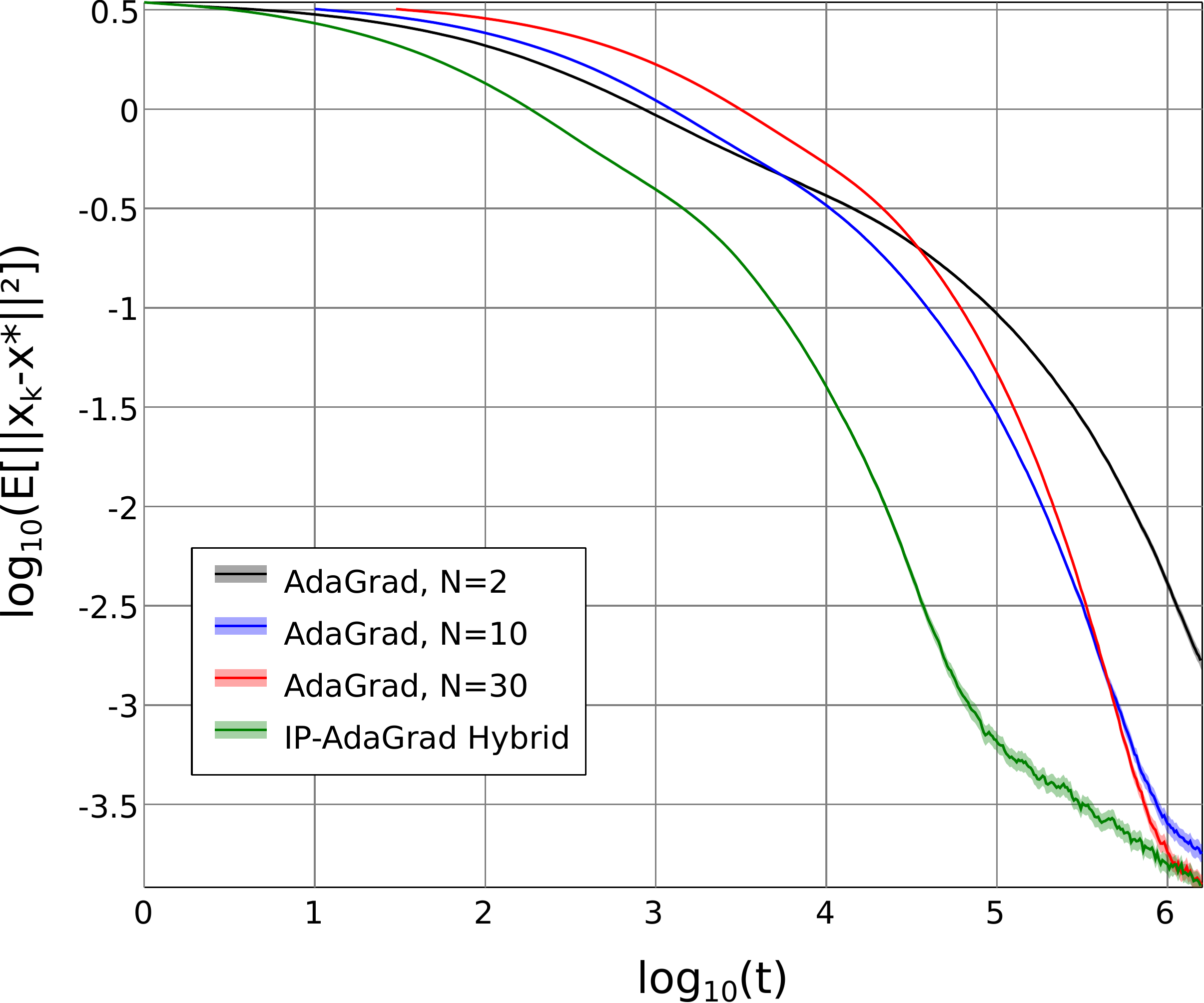}
    \subimage[AdaGrad, $\eta=.1$]{.45}{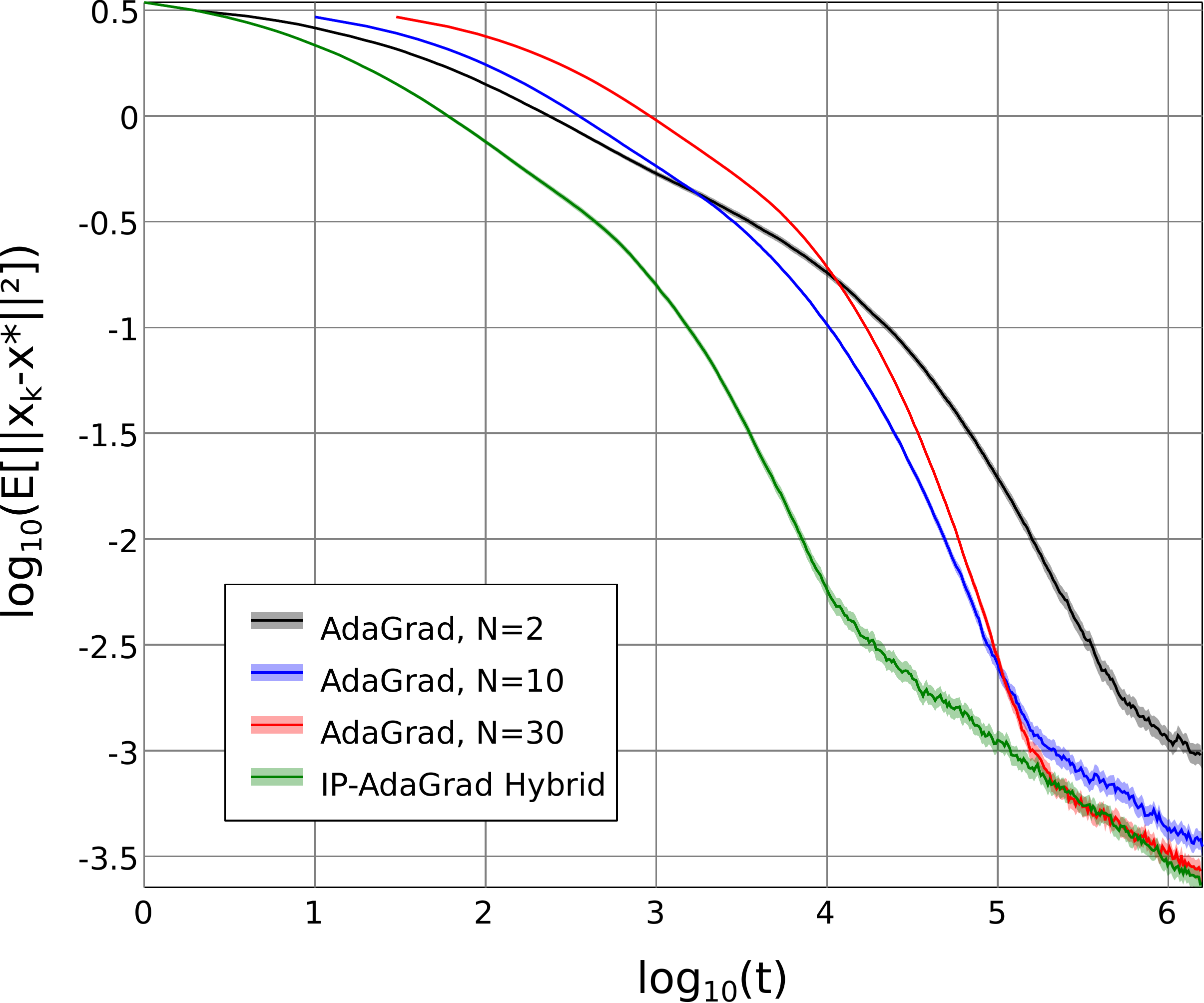}
  \end{center}
  \begin{center}
    \subimage[AdaGrad, $\eta=.5$]{.45}{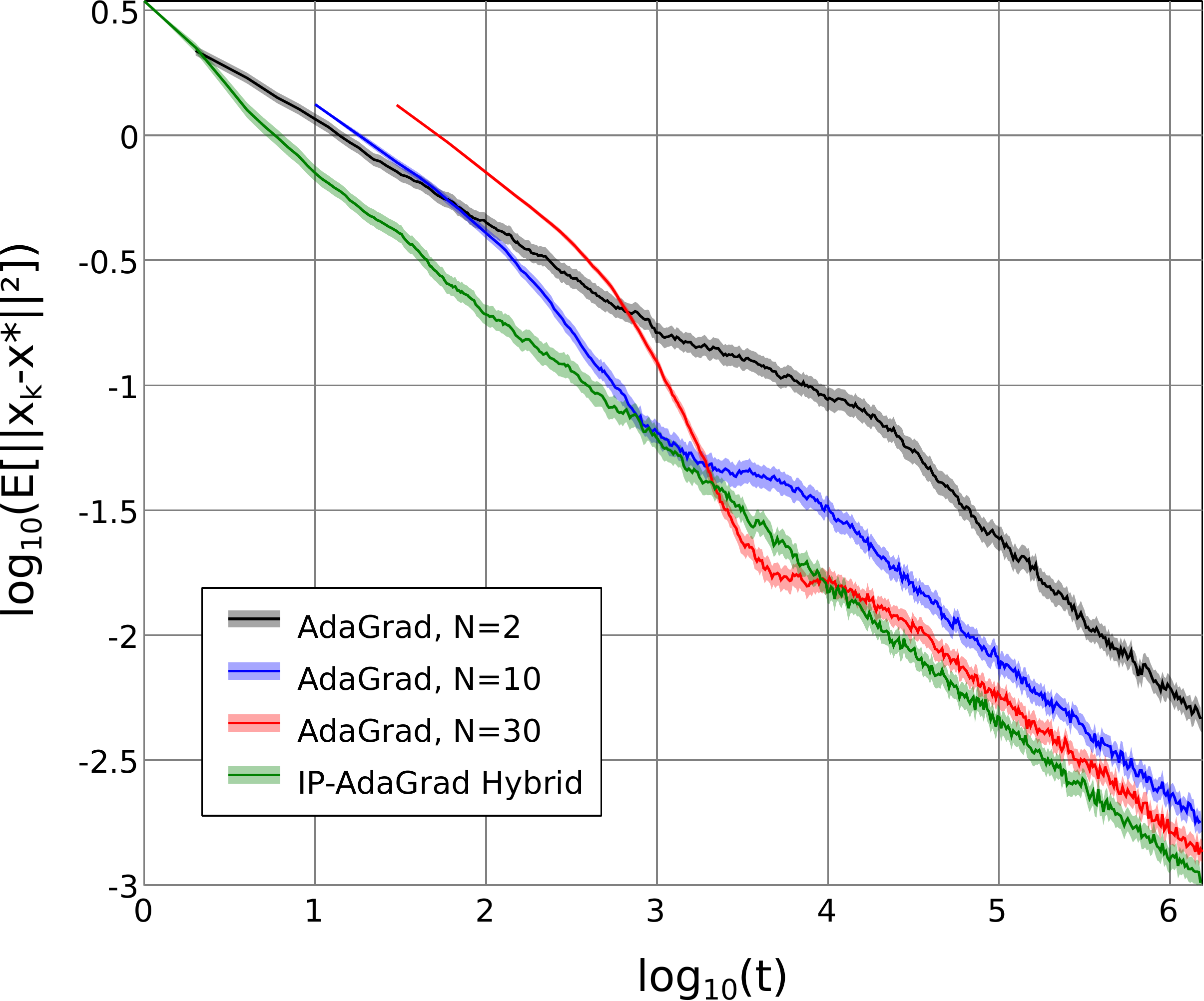}
    \subimage[Adam, $\alpha_k=.1/\sqrt{k}$]{.45}{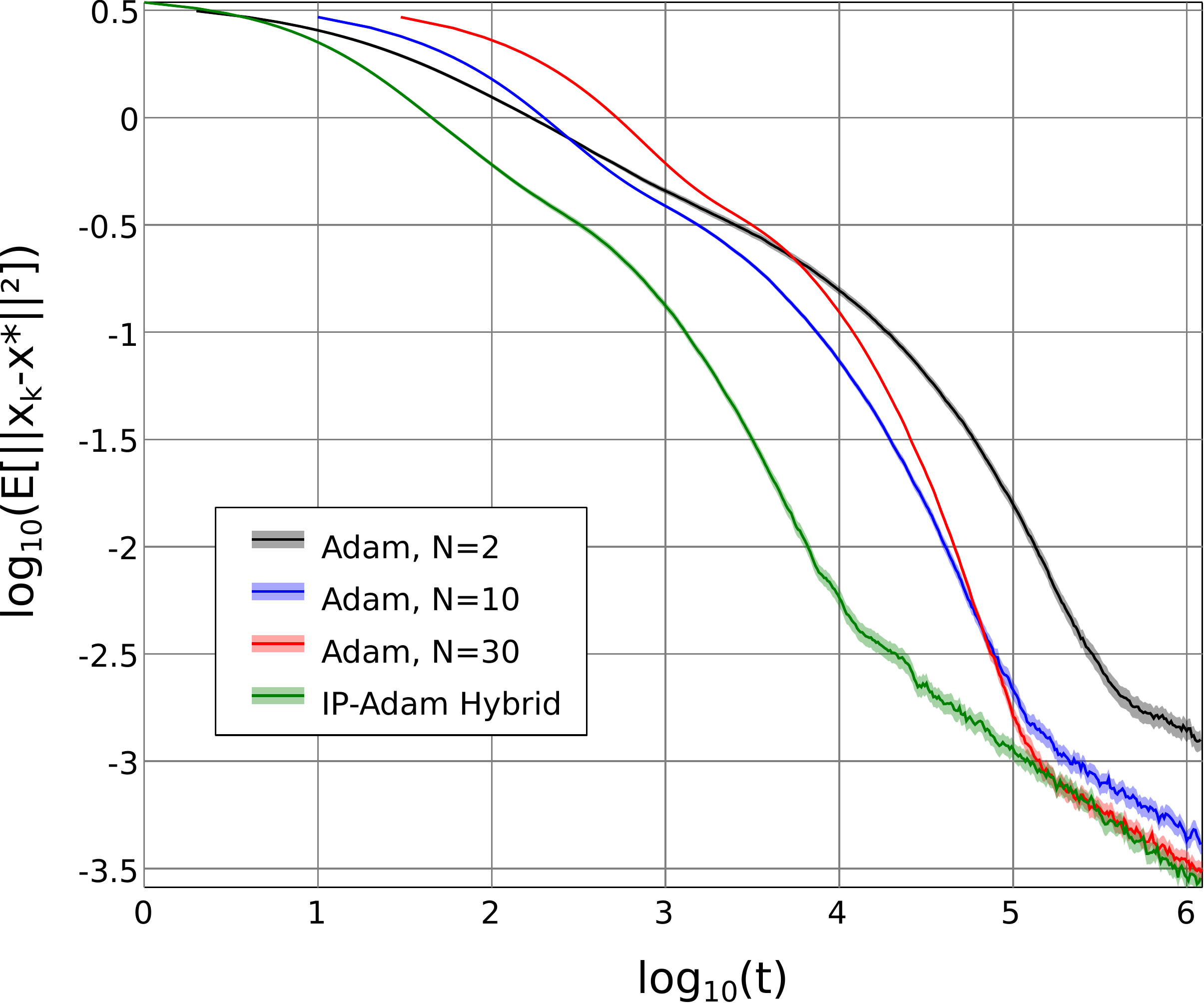}
  \end{center}
}

On Figure~\ref{fig:hybridcov}, we compare AdaGrad and Adam with their respective IP-hybrids, and observe that given a certain learning rate, the hybrid variant shows better performance than the original method for any $N$.\footnote{We remark that, in the studied cases, we observed that the choice of learning rate is quite independent of the choice of $N$, in the sense that it is reasonable to compare different $N$ under the same learning rate. This might not necessarily hold on other methods, or other decay patterns of the learning rate (e.g. as is the case of Adam with constant learning rate $\alpha_k = const.$).} However, it is worth reminding that methods like AdaGrad and Adam were designed to minimize a different type of metric (``\textit{regret}''), that is not very meaningful on MCLS problems. Thus, the asymptotic performance of these methods is much slower than Hessian-preconditioned SGD or IP, with $E[||x_k-x^{*}||^2] = O(1/\sqrt{k})$ when considering a constant learning rate $\eta$ for AdaGrad, or a learning rate decay of $\alpha_k = \Theta(1/\sqrt{k})$ for Adam.

\section{Incorporating a Gauss-Newton preconditioner}
\label{sec:gn}

While we obtain interesting results when we apply the hybrid approach to state-of-art stochastic optimization methods designed for LSL such as AdaGrad and Adam, what might be most effective for MCLS is to combine the IP-SGD hybrid with a Gauss-Newton based preconditioner, built from the previous Jacobian samples $\uhat J_t$ received by the algorithm. As the Gauss-Newton method of optimization provides an approximation of the Hessian matrix, coupling SGD, IP, or the IP-SGD hybrid with a Gauss-Newton matrix should provide a similar behavior to their Hessian-preconditioned counterparts, as long as the Gauss-Newton approximation is accurate. Employing dynamic Hessian estimation through Gauss-Newton or Quasi-Newton should thus make it easier to choose the learning rates (step size sequence) in relation to covariance-preconditioning as in AdaGrad and Adam, because in the former methods the optimal scale for the gradient steps is already embedded in the Hessian approximation, while the latter still require this fine-tuning.

Gauss-Newton might also be a more attractive option than Quasi-Newton based stochastic optimization methods, as Quasi-Newton faces many complications in the stochastic scenario. The classical Quasi-Newton methods of optimization use finite differencing to calculate the Hessian, which is extremely unstable in stochastic optimization\footnote{Older methods, from the stochastic approximation literature, mitigate this issue by reducing the finite difference bias sufficiently slowly~\cite{Wei,SA-GN,SPSA}, while the more recent ones, targeted at LSL applications, take one extra sample of the gradient while reusing pseudorandom numbers when finite differencing~\cite{schraudolph2007,bordes2009,wang2017}. In LSL, this means taking finite differences from two gradients computed from the same mini-batch.}, and use line search to guarantee its positive definiteness, which is difficult to implement in the stochastic scenario\footnote{Most methods fail to guarantee positive-definiteness of the Hessian estimate. Exceptions include Bordes et al.'s~\cite{bordes2009} and Wang et al.'s~\cite{wang2017} methods.}. Gauss-Newton, on the other hand, does not require either, which is the reason we believe it is capable of providing much better results than Quasi-Newton.

We employ the IP-SGD hybrid (Equation~\ref{eq:ip-sgd-hybrid}) with $A_t \sim B_t^{-1}/t$, where $B_t^{-1}$ dynamically estimates the inverse Gauss-Newton matrix. Ideally, we would like to use an unbiased estimator of $(J^TJ)^{-1}$, but such an estimator is most likely impossible to generate\footnote{Unbiased estimators for the inverse of the mean are only available for very specific cases, such as a Gaussian distribution of known variance~\cite{inv-mean}.}. Therefore, instead, we use the following approximation:
\lqq B_t = \frac{G_t^TG_t+R_t}{\left(\sum_{i=1}^{t-1} \left(1 - \frac{q_i}{q_{t-1}}\right)\right)^2 + t - 1}, \text{ }\text{ }\text{ } G_t=\sum_{i=1}^{t-1} \left(1 - \frac{q_i}{q_{t-1}}\right)\uhat J_i, \text{ }\text{ } R_t = \sum_{i=1}^{t-1} \uhat J_i^T\uhat J_i, \rqq
which can also be written using forget rates ($\zeta_i = \frac{q_i}{\sum_{j=1}^i q_j}$) as:
\eq{B_t = \frac{G_t^TG_t+R_t}{\left(t - 1 - 1/\zeta_{t-1}\right)^2 + t - 1}, \text{ }\text{ }\text{ } G_t=\left(\sum_{i=1}^{t-1} \uhat J_i \right) - \bar J_t/\zeta_{t-1}, \text{ }\text{ } R_t = \sum_{i=1}^{t-1} \uhat J_i^T\uhat J_i. \label{eq:sgn}}

We call this the \textit{stochastic Gauss-Newton} (SGN) method. Additionally, because in the first iterations $B_t$ may be ill-posed, we replace the update rule with $x_{t+1} = x_t$ until $B_t$ can be inverted. 

The purpose of the $\left(1 - \frac{q_i}{q_{t-1}}\right)$ factor in $G_t$ is to give the opposite weight from the gradient estimator of Equation~\ref{eq:hybrid1}. That is, while the gradient estimator gives higher weights to more recent measurements $(\uhat Q_i, \uhat J_i)$, gradually forgetting older samples, the Gauss-Newton estimator gives higher weight to those ``forgotten'' samples. Meanwhile, the regularization term $R_t$ is necessary for stability. If we used simply $B_t \propto G_t^TG_t$, the distribution of $B_t^{-1}$ could have very few finite moments, leading to instability (Note that our convergence analysis in Theorems \ref{thm:ip} and \ref{thm:ipa} requires that the gradient estimator has a minimum number of finite moments). While this can be solved simply by Tikhonov regularization, e.g. $B_t \propto G_t^TG_t + \gamma I$, the $R_t$ term defined above provides a similar regularization effect\footnote{Refer to Section~\ref{sec:stability} for details.} without requiring to arbitrate $\gamma$.

We may also incorporate Ruppert-Polyak averaging to SGN, by taking the update rule
\lqq \tilde x_{t+1} = \tilde x_t - \frac{B_t^{-1}}{t^\alpha} g_t, \text{ }\text{ }\text{ } x_{t+1} = \frac{\sum_{i=1}^{t} \tilde x_{i+1}}{t}, \rqq
where $g_t$ is computed according to Equation~\ref{eq:fuzzy} (and with $\uhat Q_t = \uhat Q(\tilde x_t)$, $\uhat J_t = \uhat J(\tilde x_t)$), $B_t$ computed according to Equation~\ref{eq:sgn}, and $\frac12 < \alpha < 1$. We call this variant \textit{averaged SGN} (aSGN).

\subsection{Experiments}
\label{sec:gn-exp}

We compared the performance of SGN and averaged SGN to two stochastic Quasi-Newton methods~\cite{bordes2009,wang2017}, the IP-Adagrad and IP-Adam hybrids, and a Ruppert-Polyak averaged implementation of the IP-Adam hybrid\footnote{When incorporating Ruppert-Polyak averaging to the IP-Adam hybrid, we mean taking the average of all iterates, when using a learning rate of $\alpha_k = \Theta(1/k^a)$, with $1/2 < a < 1$, analogously to the averaged IP-SGD hybrid or aSGN. We did not employ exponential averaging as the authors of Adam recommend~\cite{adam} as it does not have the same asymptotic properties as standard Ruppert-Polyak averaging.} on four different problems. The AdaGrad and Adam variants were all implemented in diagonal form. The Quasi-Newton methods required a few modifications in order to obtain a reasonable performance. While Bordes et al. originally constrain that the inverse Hessian entries must be above a threshold $.01/\lambda$, we further constrain that they must be also below $100/\lambda$. Additionally, we did not include their skipping strategy as it is not necessary here. As of Wang et al.'s method, we do not update the Hessian matrix when the finite difference vector is $y=0$, and for simplicity we used the BFGS version instead of the L-BFGS version of the method, as dimensionality is not a problem here. Also, although the Quasi-Newton methods require two gradient measurements per iteration, we counted only one when comparing performance, although this does not affect our conclusion.

\let\oldarraystretch=\arraystretch
\renewcommand{\arraystretch}{1.5}
\begin{table}
\centering
\caption{Choice of hyperparameters for the experiments of Figure~\ref{fig:sgn}.}
\label{table:hyperparameters}
\begin{tabular}{| c | c | c | c | c |}
\hline
 & (a)  & (b) & (c)  & (d) \\
\hline
SGN & \multicolumn{4}{c |}{$a_t = \frac1{t}$, $\zeta_t = \sqrt{\frac{3}{t+2}}$} \\[2pt]
\hline
aSGN & \multicolumn{4}{c |}{$a_t = \frac1{t^{.75}}$, $\zeta_t = \frac{3}{t+2}$} \\[2pt]
\hline
Bordes2009 & \multicolumn{2}{c |}{$\lambda=.1, t_0=1$} & \multicolumn{2}{c |}{$\lambda=.01, t_0=1$} \\[2pt]
\hline
Wang2017 & \multicolumn{4}{c |}{$\eta_k = \frac{1}{k}$, $H_1 = 10^{-10}I$} \\[2pt]
\hline
IP-AdaGrad Hybrid & \multicolumn{4}{c |}{$\eta = .1$, $\zeta_t = \frac{3}{t+2}$} \\[2pt]
\hline
IP-Adam Hybrid & \multicolumn{4}{c |}{$\alpha_t = \frac{.1}{\sqrt{t}}$, $\zeta_t = \frac{3}{t+2}$} \\[2pt]
\hline
Avg. IP-Adam Hybrid & \multicolumn{4}{c |}{$\alpha_t = \frac{1}{t^{.75}}$, $\zeta_t = \frac{3}{t+2}$} \\[2pt]
\hline
\end{tabular}
\end{table}
\let\arraystretch=\oldarraystretch

The hyperparameters for each method were chosen manually and are listed on Table~\ref{table:hyperparameters}. The learning rates of the AdaGrad and Adam variants ($\eta$ and $\alpha_t$, respectively) and the $\lambda$ parameter of Bordes et al.'s method were selected by a non-exhaustive search, while all other hyperparameters were set to their default values. As observed in Section~\ref{sec:hybrid-exp}, forget rates $\zeta_t$ do not significantly impact performance and were therefore not fine-tuned, while the learning rates of Gauss-Newton and Quasi-Newton variants are automatically determined by the theory.

\tsubimages[t]{Comparison between proposed methods (SGN, aSGN), existing Quasi-Newton approaches (Bordes et al.~\cite{bordes2009}, Wang et al.~\cite{wang2017}), and adaptive gradient methods (AdaGrad~\cite{adagrad}, Adam~\cite{adam}). In all cases we show the average error of 1000 independent runs. We used $N=20$ samples per iteration to compute gradients $\widehat{\nabla f}_N$ in all non-hybrid methods (Bordes2009 and Wang2015). As in Figure~\ref{fig:n1}, the error margin of each configuration is denoted with a translucent fill.}{sgn}{
  \begin{center}
    \subimage[Problem \#1]{.45}{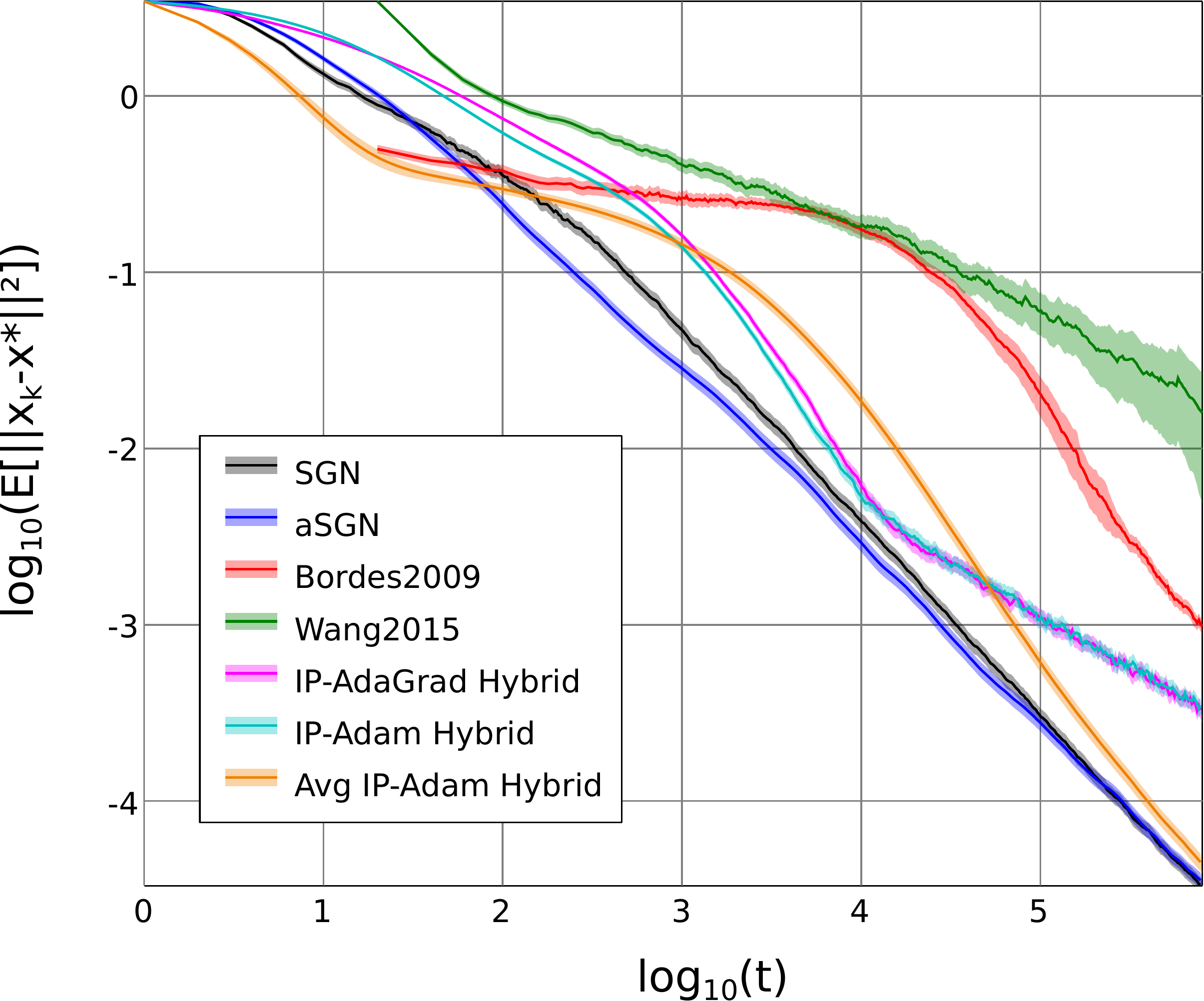}
    \subimage[Problem \#2, with $n=10$, $m=25$]{.45}{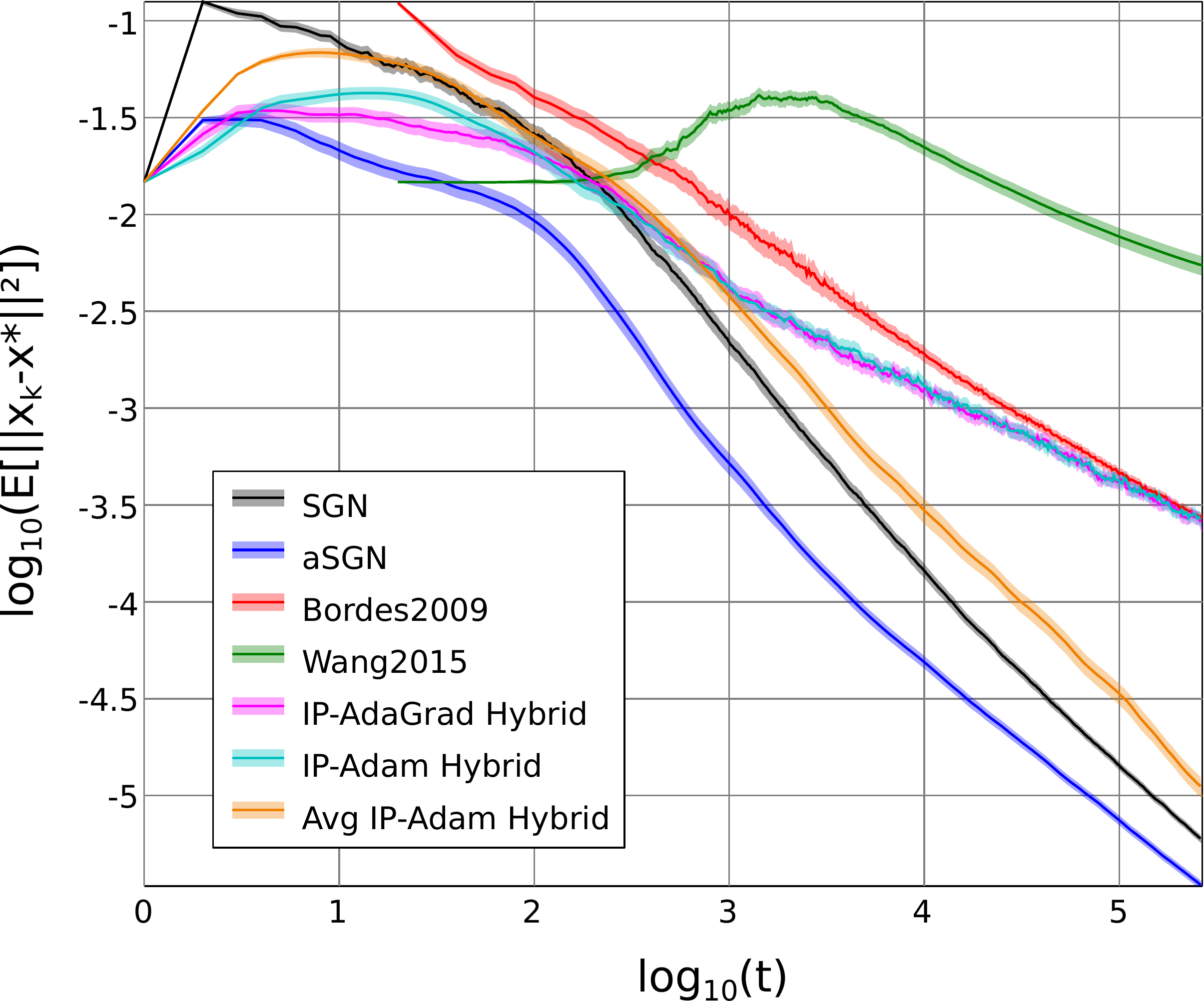}
  \end{center}
  \begin{center}
    \subimage[Problem \#2, with $n=10$, $m=20$]{.45}{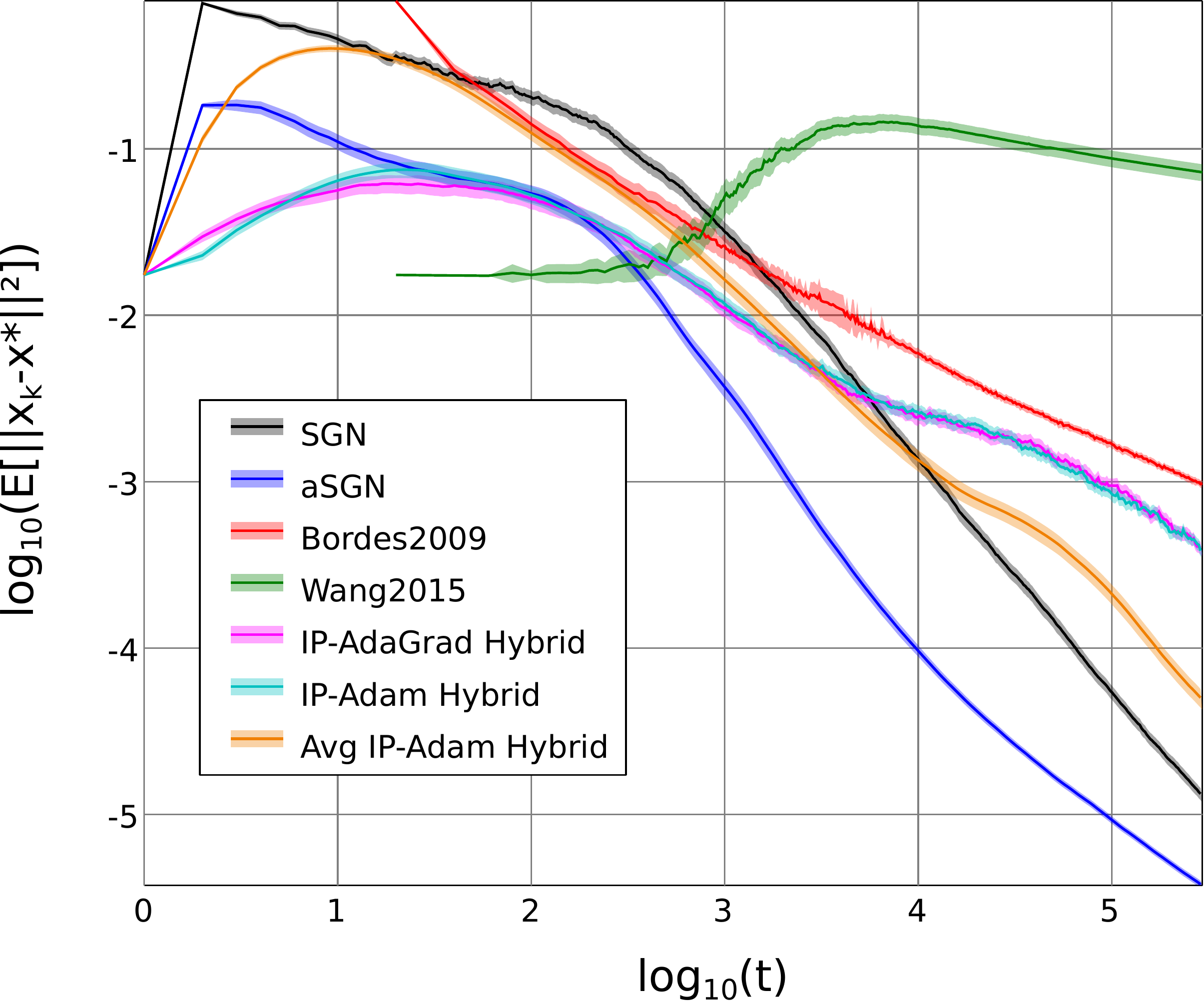}
    \subimage[Problem \#2, with $n=10$, $m=15$]{.45}{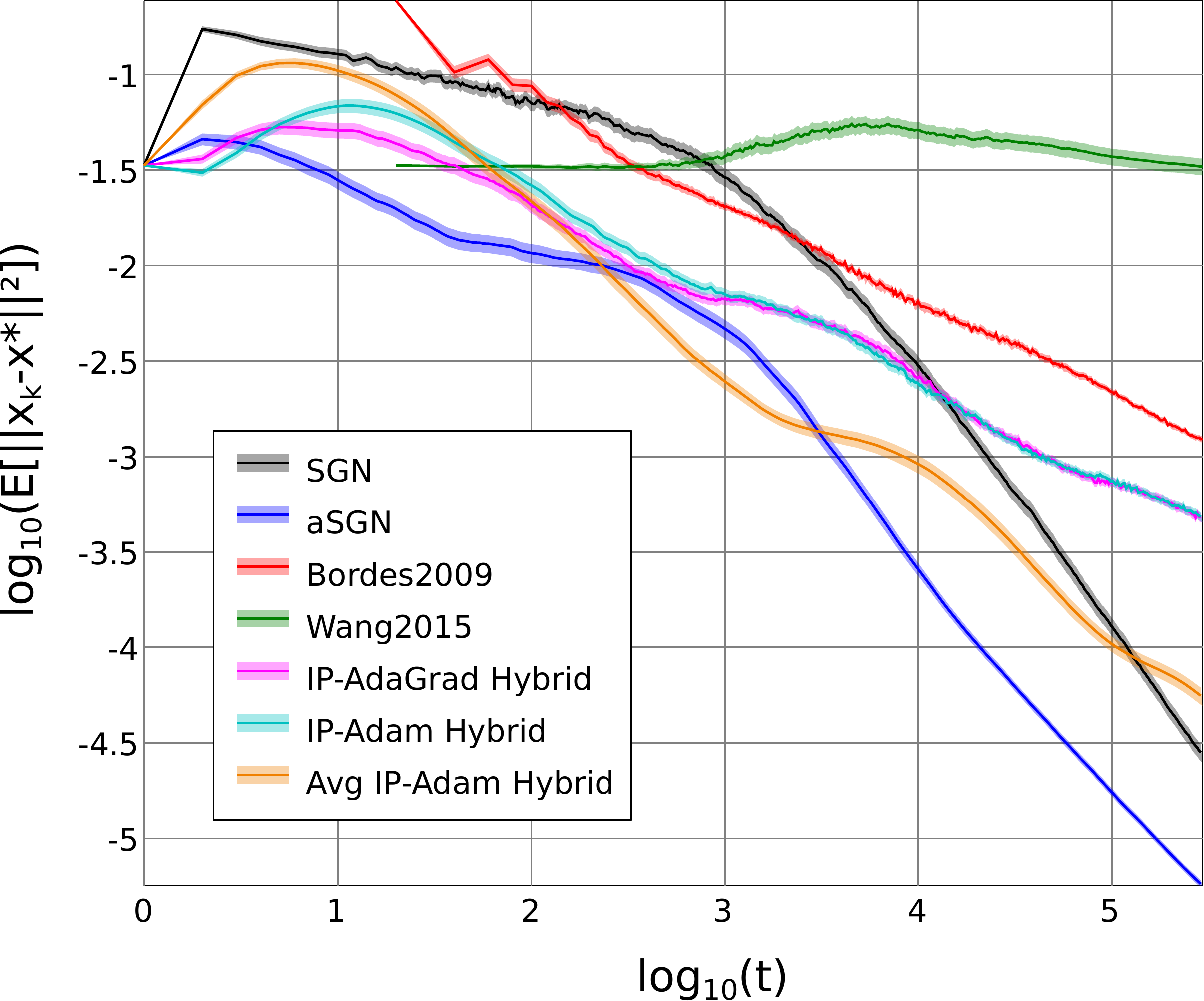}
  \end{center}
}

Figure~\ref{fig:sgn} shows how the Quasi-Newton approaches perform very poorly on MCLS, probably due to their inconsistent Hessian estimators. The IP-AdaGrad and IP-Adam hybrids have almost identical behavior, converging to the same $\Theta(1/\sqrt{t})$ asymptote. SGN, aSGN and the averaged IP-Adam hybrid exhibit the best results, and it is clear in (a) that they converge to the same asymptote. Although Ruppert-Polyak averaging guarantees optimal asymptotic behavior regardless of the choice of preconditioner, we observe that in all four cases (a-d) pre-asymptotic behavior is generally better for Gauss-Newton preconditioning (aSGN) than for covariance preconditioning (averaged IP-Adam hybrid).

\section{Limitations and generalizations}
\label{sec:limitations}
Our cost assumptions (Section~\ref{sec:assumptions}) might pose some constraints on what kinds of problems the presented methods may be applied to:
\begin{itemize}
\item When the problem dimensions $n,m$ are very high, we may not neglect the cost of tasks such as multiplying $J$ by $Q$ or inverting $B_t$. Using sparse matrices may mitigate for the former issue, while the latter may require updating $B_t$ less often. Although the stochastic Gauss-Newton approach is probably not very effective for very high $n$, due to the $O(n^3)$ cost of inverting $B_t$, it is not rare for MCLS problems to have very small $n$ (e.g. 9 parameters in~\cite{pramook}).
\item When $(\uhat Q, \uhat J)$ are uncorrelated, there is no need to impose $i\neq j$ on Equation~\ref{eq:gradprec}. Nevertheless, IP remains asymptotically faster than SGD. The same applies to the hybrid methods.
\item Similarly, when the cost of computing a $(\uhat Q, \uhat J)$ pair is considerably higher than computing $\uhat Q$ alone, Equation~\ref{eq:gradprec} might not be the most efficient way to estimate the gradient, in the sense that we may want to compute more samples of $\uhat Q$ than $\uhat J$ per iteration. However, gradually increasing the number of samples per iteration is still asymptotically more efficient than keeping it constant. The hybrid method, however, would have to be modified to accommodate this more sophisticated gradient estimator.
\item We believe IP can be modified to handle a gradient estimator that is biased but consistent (i.e. bias goes to zero as $N\rightarrow \infty$), although this would require a more careful convergence analysis, which we leave for future work. Note that none of the literature methods considered in this paper, as well as the hybrid methods, would be able to converge to the correct minimum when the gradient estimator is biased.
\end{itemize}

\section{Theoretical analysis}
\label{sec:analysis}

This section groups the more detailed theoretical results and their proofs regarding the proposed methods. Section~\ref{sec:theorems} proves convergence properties of IP and aIP on strongly convex problems, while Section~\ref{sec:asym-hybrid} analyzes the hybrid methods in a more limited, linear scenario. Section~\ref{sec:stability} provides some theoretical support to justify the regularization scheme of our Gauss-Newton methods.

\subsection{Convergence analysis of IP and averaged IP}
\label{sec:theorems}

In this section, we prove the convergence properties of IP and aIP for strongly convex problems.

Let $\mathcal{F}_k$ be an increasing sequence of $\sigma$-algebras, where $x_1\in\mathbb{R}^n$ is an $\mathcal{F}_0$-measurable random variable, and consider the the update rule:
\begin{equation}
x_{k+1} = x_k - A_k g_k \label{eq:tur}
\end{equation}
where $A_k \in \mathbb{R}^{n\times n}$ a deterministic sequence of positive matrices, and $g_k = \nabla f(x_k) + \mathcal{E}_k$ is an unbiased estimator of $\nabla f(x_k)$, for some function $f:\mathbb{R}^n\rightarrow\mathbb{R}$. That is, we assume $\mathcal{E}_k\in\mathbb{R}^n$ is an $\mathcal{F}_k$-measurable random variable satisfying $E[\mathcal{E}_k|\mathcal{F}_{k-1}] = 0$ whose conditional distribution with respect to $\mathcal{F}_{k-1}$ is a function of $x_k$ and $k$, i.e. $E[g(\mathcal{E}_k)|\mathcal{F}_{k-1}] = E[g(\mathcal{E}_k)|x_k]$ for every function $g$ of $\mathbb{R}^n$. Further consider the following set of assumptions:
\begin{assumption}
\label{ass:convex}
$x^*$ is the only critical point of $f$; $\nabla^2 f(x)$ exists and is positive definite everywhere on $\mathbb{R}^n$, satisfying $\sup_x ||\nabla^2f(x)||_2 < +\infty$ and $\sup_x ||(\nabla^2f(x))^{-1}||_2 < +\infty$, where $||\cdot||_2$ is the induced L2 norm for matrices.
\end{assumption}
\begin{assumption}
\label{ass:moms}
$\sup_{x_k,k} N_k^{p/2}E[||\mathcal{E}_k||^p|\mathcal{F}_{k-1}] < +\infty,$ for all $p \in \{2, ..., M\}$, where $M \geq 2$ (to be specified later), for some positive nondecreasing sequence $N_k$.
\end{assumption}
\begin{assumption}
\label{ass:aknk}
$A_k = a_k D$, where $a_k\in\mathbb{R}$ is of the form $a_k = (k+c)^{-\alpha}$, for some $c\geq 0$ and $0 < \alpha \leq 1$, and $D$ is a positive definite matrix; while $N_k \sim N_1 k^q$, for some $N_1>0$, with $q\geq 0$.
\end{assumption}
\begin{assumption}
\label{ass:var}
The function $\tilde U(x) = \lim_{k\rightarrow\infty} N_k E[\mathcal{E}_k\mathcal{E}_k^T|x_{k} = x]$ exists and is continuous in a neighborhood of $x^*$ with $\tilde U(x^*) = \Sigma^2$, for some positive matrix $\Sigma^2$.
\end{assumption}
\begin{assumption}
\label{ass:C3}
$f$ is $C^3$ in a neighborhood of $x^*$.
\end{assumption}

The convergence analysis is organized as follows. Lemma~\ref{lemma:convp} shows that the moments of $||x_k-x^*||$ converge fast enough, which will be used to prove almost sure (a.s.) convergence on Lemma~\ref{thm:as}. Once a.s. convergence is established, Theorem~\ref{thm:ip} calculates the convergence rate of IP by direct application of a theorem by Fabian~\cite{fabian1968}.

The convergence rate of aIP follows the same methodology of Ruppert~\cite{avg2}. By a.s. convergence, we compute a bound to the asymptotic decay of $||x_k-x^*||$ on Lemma~\ref{lemma:limsuplog}, which is used on Theorem~\ref{thm:ipa} to show that the difference between a linearized version of the problem (where $\nabla f(x)$ is a linear function) and the original nonlinear problem is negligible, so that the convergence rate of aIP is calculated based on the linearized problem.

\begin{lemma}
\label{lemma:convp}
Regarding Equation~\ref{eq:tur}, assume Assumptions \ref{ass:convex}, \ref{ass:moms} and \ref{ass:aknk}. Let $\gamma = \inf_{x\neq x^*} \langle x-x^*, \nabla f(x)\rangle/||x-x^*||_{D^{-1}}^2$, where $||u||_{D^{-1}}$ denotes $\sqrt{u^TD^{-1}u}$. Then $E[||x_k-x^*||_{D^{-1}}^{2p}] = O(B_k^p)$ as $k\rightarrow\infty$, for all non-negative integer $p\leq M/2$, where 
\lqq B_k = \left\{\begin{matrix}
1/k^{\alpha+q}, & \text{if }0 < \alpha < 1 \\ 
1/k^{1+q}, & \text{if }\alpha = 1 \text{ and } 2\gamma > 1+q\\
(\log k)/k^{1+q}, & \text{if }\alpha = 1 \text{ and } 2\gamma = 1+q\\
1/k^{2\gamma}, & \text{if }\alpha = 1 \text{ and } 2\gamma < 1+q
\end{matrix}\right.. \rqq
\end{lemma}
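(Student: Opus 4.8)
The plan is to study the squared distance in the preconditioner metric, $V_k := \|x_k-x^*\|_{D^{-1}}^2 = z_k^T D^{-1} z_k$ with $z_k := x_k - x^*$, as a Lyapunov function. Expanding the update $z_{k+1} = z_k - a_k D g_k$ and using that $D$ is symmetric gives the exact one-step identity
\[ V_{k+1} = V_k - 2 a_k \langle z_k, g_k\rangle + a_k^2\, g_k^T D g_k. \]
Writing $g_k = \nabla f(x_k) + \mathcal{E}_k$ and taking $E[\,\cdot\,|\mathcal{F}_{k-1}]$, the term linear in $\mathcal{E}_k$ vanishes. The drift obeys $\langle z_k, \nabla f(x_k)\rangle \ge \gamma V_k$ by the definition of $\gamma$, where $\gamma>0$ follows from the strong convexity implied by Assumption~\ref{ass:convex}; the curvature term satisfies $\nabla f(x_k)^T D\, \nabla f(x_k) = O(V_k)$ using $\sup_x\|\nabla^2 f(x)\|_2 <\infty$; and $E[\mathcal{E}_k^T D\mathcal{E}_k|\mathcal{F}_{k-1}] = O(N_k^{-1})$ by Assumption~\ref{ass:moms} with $p=2$. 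This produces the scalar first-moment recursion
\[ E[V_{k+1}\,|\,\mathcal{F}_{k-1}] \le (1 - 2\gamma a_k + O(a_k^2))\,V_k + O(a_k^2/N_k), \]
and taking full expectation reduces the $p=1$ claim to a standard Chung-type recursion lemma for scalars of the form $u_{k+1}\le(1-c\,a_k)u_k + F_k$; substituting $a_k=(k+c)^{-\alpha}$, $N_k\sim N_1 k^q$, $c=2\gamma$ and $F_k \sim a_k^2/N_k$ reproduces exactly the four cases of $B_k$.

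For $p\ge 2$ I would proceed by induction on $p$. Split $V_{k+1} = \tilde W_k + \tilde\xi_k$, where $\tilde W_k := E[V_{k+1}|\mathcal{F}_{k-1}]$ is the $\mathcal{F}_{k-1}$-measurable drift bounded as above and $\tilde\xi_k := V_{k+1}-\tilde W_k$ is a martingale difference. Expanding $(\tilde W_k+\tilde\xi_k)^p$ and taking conditional expectation, the $j=1$ cross term drops out because $E[\tilde\xi_k|\mathcal{F}_{k-1}]=0$, leaving
\[ E[V_{k+1}^p|\mathcal{F}_{k-1}] = \tilde W_k^p + \sum_{j=2}^p \binom{p}{j}\tilde W_k^{\,p-j}\,E[\tilde\xi_k^j|\mathcal{F}_{k-1}]. \]
Since $\tilde\xi_k$ is dominated by the linear noise $-2a_k\langle z_k,\mathcal{E}_k\rangle$, the H\"older inequality together with Assumption~\ref{ass:moms} (which gives $E[\|\mathcal{E}_k\|^r|\mathcal{F}_{k-1}] = O(N_k^{-r/2})$ for $r\le M$) yields $E[\tilde\xi_k^j|\mathcal{F}_{k-1}] = O(a_k^j N_k^{-j/2} V_k^{j/2})$, so the $j$-th term is $O(a_k^j N_k^{-j/2} V_k^{p-j/2})$; this is precisely where the hypothesis $2p\le M$, i.e. $p\le M/2$, is consumed. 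Expanding $\tilde W_k^p = (1-2\gamma a_k + O(a_k^2))^p V_k^p + O(a_k^2 N_k^{-1} V_k^{p-1})$ and collecting terms gives
\[ E[V_{k+1}^p] \le (1-2p\gamma a_k + O(a_k^2))\,E[V_k^p] + O(a_k^2 N_k^{-1})\,E[V_k^{p-1}] + \sum_{j=2}^p O(a_k^j N_k^{-j/2})\,E[V_k^{p-j/2}]. \]
Every moment on the right with exponent strictly below $p$ is controlled by the inductive hypothesis $E[V_k^s]=O(B_k^s)$, with Jensen's inequality used to pass from the integer moments to the fractional exponents $p-j/2$ (for $j\ge 2$ these never exceed $p-1$).

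It then remains to verify that this recursion closes. One checks that the slowest-decaying forcing term is the $j=2$ term (equivalently the $a_k^2 N_k^{-1}E[V_k^{p-1}]$ drift term), of size $O(a_k^2 N_k^{-1}B_k^{p-1})$, the higher-$j$ contributions being smaller by factors $O(a_k N_k^{-1/2}B_k^{-1/2})$, which is $o(1)$ in all four regimes. Applying the same scalar recursion lemma with homogeneous rate $2p\gamma$ against this forcing, a short computation in each regime — $\alpha<1$ (quasi-static balance giving $B_k^p\sim F_k/(2p\gamma a_k)$), and $\alpha=1$ with $2\gamma$ greater than, equal to, or less than $1+q$ — confirms the output is exactly $O(B_k^p)$, with the boundary subcase $2\gamma=1+q$ reproducing the $\log$ factor raised to the $p$-th power. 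The main obstacle I anticipate is the bookkeeping of the $j\ge 2$ fluctuation terms: one must simultaneously track their powers of $a_k$, $N_k$ and $V_k$, justify the fractional-moment estimates, and confirm in each case that none decays more slowly than $B_k^p$ — the boundary case $2\gamma=1+q$ and the competition between the homogeneous decay $k^{-2p\gamma}$ and the polynomial forcing being the most delicate points.
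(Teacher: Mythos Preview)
Your approach matches the paper's: induction on $p$, expand the one-step recursion, use $E[\mathcal{E}_k\mid\mathcal{F}_{k-1}]=0$ to kill the first-order cross term, control the remaining cross terms via Assumption~\ref{ass:moms} and the inductive hypothesis (with Jensen for the fractional exponents), and close with a Chung-type scalar recursion yielding the four regimes for $B_k$. The only substantive difference is organizational: rather than your conditional-mean/martingale-difference split $V_{k+1}=\tilde W_k+\tilde\xi_k$, the paper writes $x_{k+1}-x^*=v_k-a_kD\mathcal{E}_k$ with $v_k:=x_k-x^*-a_kD\nabla f(x_k)$ and expands $\bigl(\|v_k\|_{D^{-1}}^2-2a_k\langle v_k,\mathcal{E}_k\rangle+a_k^2\|D\mathcal{E}_k\|_{D^{-1}}^2\bigr)^p$ directly, letting the binomial index run to $2p$.

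One caveat on your version worth flagging: $\tilde\xi_k$ is not purely the linear noise $-2a_k\langle z_k,\mathcal{E}_k\rangle$; it also carries the centered quadratic piece $a_k^2\bigl(\mathcal{E}_k^TD\mathcal{E}_k-E[\mathcal{E}_k^TD\mathcal{E}_k\mid\mathcal{F}_{k-1}]\bigr)$, so your stated bound $E[\tilde\xi_k^{\,j}\mid\mathcal{F}_{k-1}]=O(a_k^{\,j}N_k^{-j/2}V_k^{\,j/2})$ misses a $V_k$-free contribution of order $a_k^{2j}N_k^{-j}$ (and your $\tilde W_k^{\,p}$ expansion similarly drops the analogous $l\ge 2$ binomial terms). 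After taking full expectation and invoking the inductive hypothesis these extras are still dominated by the main forcing $O(a_k^2N_k^{-1}B_k^{\,p-1})$---the ratio is $(a_k^2/(N_kB_k))^{j-1}=o(1)$ in all four regimes---so the argument closes, but the bookkeeping is a bit heavier than you wrote. The paper's decomposition sidesteps this by keeping the quadratic noise $a_k^2\|D\mathcal{E}_k\|_{D^{-1}}^2$ explicit from the outset rather than hiding part of it inside $\tilde W_k$.
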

\begin{proof}
We can write:
\lqq E[||x_{k+1}-x^*||_{D^{-1}}^{2p}] = E\left[\left(||x_k-x^*-a_kD\nabla f(x_k) -a_kD\mathcal{E}_k||_{D^{-1}}^2\right)^p\right] \rqq
\lqq  = E\left[\left(||x_k-x^*-a_kD\nabla f(x_k)||_{D^{-1}}^2 - ... \right.\right. \rqq
\lqq  ...\left.\left. 2a_k\langle x_k-x^*-a_kD\nabla f(x_k), D\mathcal{E}_k\rangle_{D^{-1}} + a_k^2||D\mathcal{E}_k||_{D^{-1}}^2\right)^p\right] \rqq
\lqq  \leq E\left[||x_k-x^*-a_kD\nabla f(x_k)||_{D^{-1}}^{2p}\right] - ... \rqq
\lqq ... 2pa_kE\left[\langle x_k-x^*-a_kD\nabla f(x_k), D\mathcal{E}_k\rangle_{D^{-1}}||x_k-x^*-a_kD\nabla f(x_k)||_{D^{-1}}^{2p-2}\right] + ... \rqq
\lqq ...\sum_{2\leq i\leq 2p} \binom{2p}{i} E\left[||x_k-x^*-a_kD\nabla f(x_k)||_{D^{-1}}^{2p-i}.a_k^{i}||D\mathcal{E}_k||_{D^{-1}}^i\right]. \rqq

The second term ($2pa_kE[...]$) in the expression above is zero since $E[\mathcal{E}_k|x_k] = 0$, giving us
\lqq E[||x_{k+1}-x^*||_{D^{-1}}^{2p}] \leq E\left[||x_k-x^*-a_kD\nabla f(x_k)||_{D^{-1}}^{2p}\right] + ... \rqq
\lqq ...\sum_{2\leq i\leq 2p} \binom{2p}{i} E\left[||x_k-x^*-a_kD\nabla f(x_k)||_{D^{-1}}^{2p-i}.a_k^{i}||D\mathcal{E}_k||_{D^{-1}}^i\right]. \rqq

Suppose by induction that the lemma is true for $p-1$. The base case ($p=0$) is trivial.

Note then that for $2 \leq i \leq 2(p-1)$, we may simplify:
\begin{align}
E\left[||x_k-x^*-a_kD\nabla f(x_k)||_{D^{-1}}^{2p-i}.a_k^{i}||D\mathcal{E}_k||_{D^{-1}}^i\right]&= \nonumber \\
O\left(E\left[||x_k-x^*-a_kD\nabla f(x_k)||_{D^{-1}}^{2p-i}.a_k^{i}/N_k^{i/2}\right]\right)&= \nonumber \\
O\left(E\left[(1+O(a_k))||x_k-x^*||_{D^{-1}}^{2p-i}.a_k^i/N_k^{i/2}\right]\right)&= \nonumber \\
O\left(E\left[||x_k-x^*||_{D^{-1}}^{2p-i}.a_k^i/N_k^{i/2}\right]\right)&, \nonumber
\end{align}
which, because $E[X^h] \leq E[X^r]^{h/r}$ for any random variable $X \in \mathbb{R}_+$ where $0 < h \leq r$, can be bounded to:
\lqq O\left(E\left[||x_k-x^*||_{D^{-1}}^{2p-1}\right]^{\frac{2p-i}{2p-1}}.a_k^i/N_k^{i/2}\right)= O\left(B_k^{p-i/2}.a_k^i/N_k^{i/2}\right), \rqq
and thus:
\lqq E[||x_{k+1}-x^*||_{D^{-1}}^{2p}] \leq E\left[||x_k-x^*-a_kD\nabla f(x_k)||_{D^{-1}}^{2p}\right] +... \rqq
\lqq ... \sum_{2\leq i\leq 2p} \binom{2p}{i} O\left(B_k^{p-i/2}a_k^{i}/N_k^{i/2}\right).\rqq

Then, because $B_k^{-1} = O(N_k/a_k)$ implies $\sum_{2\leq i\leq 2p} O\left(B_k^{p-i/2}a_k^{i}/N_k^{i/2}\right) = \sum_{2\leq i\leq 2p} O\left(B_k^{p-1}a_k^{i/2+1}/N_k\right) = O\left(B_k^{p-1}a_k^{2}/N_k\right)$, we may write:
\lqq E[||x_{k+1}-x^*||_{D^{-1}}^{2p}] \leq E\left[||x_k-x^*-a_kD\nabla f(x_k)||_{D^{-1}}^{2p}\right] + O\left(a_k^2B_k^{p-1}/N_k\right) \rqq
\lqq \leq (1-2p\gamma a_k+O(a_k^2))E\left[||x_k-x^*||_{D^{-1}}^{2p}\right] + O\left(a_k^2B_k^{p-1}/N_k\right). \rqq

The solution to this recurrence is:
\lqq E[||x_{k+1}-x^*||_{D^{-1}}^2] = O\left(\sum_{j=1}^ke^{\sum_{i=j+1}^{k}- 2pa_i\gamma} a_j^2B_k^{p-1}/N_j\right) \rqq
\lqq = \left\{\begin{matrix}
O\left(k^{\alpha} \cdot k^{-\alpha(p+1)-qp}\right), & \text{if }0 < \alpha < 1 \\ 
O\left(\sum_{j=1}^k\frac{j^{2\gamma p}}{k^{2\gamma p}} j^{-(p+1)-qp}\right), & \text{if }\alpha = 1 \text{ and } 2\gamma > 1+q\\
O\left(\sum_{j=1}^k\frac{j^{2\gamma p}}{k^{2\gamma p}} (\log p)^{p-1}j^{-(p+1)-qp}\right), & \text{if }\alpha = 1 \text{ and } 2\gamma = 1+q\\
O\left(\sum_{j=1}^k\frac{j^{2\gamma p}}{k^{2\gamma p}} j^{-2-q-2\gamma(p-1)}\right), & \text{if }\alpha = 1 \text{ and } 2\gamma < 1+q
\end{matrix}\right. \rqq
\lqq =O(B_k^p), \rqq
which proves the lemma.
\end{proof}

\begin{lemma}
\label{thm:as}
Regarding Equation~\ref{eq:tur}, assume Assumptions \ref{ass:convex}, \ref{ass:moms} and \ref{ass:aknk} with $M \geq 2\lfloor \frac1{\alpha+q} \rfloor+2$ if $\alpha < 1$, or $M \geq 2\lfloor \frac1{\min\{1+q,2\gamma\}} \rfloor+2$ if $\alpha=1$, where $\gamma = \inf_{x\neq x^*} \langle x-x^*, \nabla f(x)\rangle/||x-x^*||_{D^{-1}}^2$. Then $x_k \rightarrow x^*$ almost surely.
\end{lemma}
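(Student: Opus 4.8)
The plan is to deduce almost sure convergence from the moment bound already established in Lemma~\ref{lemma:convp}, via a Borel--Cantelli argument combined with Markov's inequality. The key observation is that Lemma~\ref{lemma:convp} gives $E[\|x_k-x^*\|_{D^{-1}}^{2p}] = O(B_k^p)$ for every non-negative integer $p \leq M/2$. To obtain a.s. convergence it suffices to show $\sum_k E[\|x_k-x^*\|_{D^{-1}}^{2p}] < \infty$ for a single admissible $p$, because then by Markov's inequality $\sum_k P(\|x_k-x^*\|_{D^{-1}} > \epsilon) \leq \epsilon^{-2p}\sum_k E[\|x_k-x^*\|_{D^{-1}}^{2p}] < \infty$ for every $\epsilon>0$, and Borel--Cantelli yields $\|x_k-x^*\|_{D^{-1}} \to 0$ almost surely, hence $x_k \to x^*$ almost surely since $D^{-1}$ is positive definite.

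First I would identify, in each of the four regimes of $B_k$, the smallest integer $p$ for which $\sum_k B_k^p$ converges, and check that this $p$ does not exceed the available $M/2$. In the case $0<\alpha<1$ we have $B_k = 1/k^{\alpha+q}$, so $\sum_k B_k^p$ converges as soon as $p(\alpha+q) > 1$, i.e. $p > 1/(\alpha+q)$; the smallest such integer is $p = \lfloor 1/(\alpha+q)\rfloor + 1$, which is exactly why the hypothesis requires $M \geq 2\lfloor 1/(\alpha+q)\rfloor + 2$, guaranteeing $p \leq M/2$. When $\alpha=1$, the dominant polynomial decay rate of $B_k$ is $\min\{1+q, 2\gamma\}$ (the logarithmic factor in the boundary case $2\gamma = 1+q$ does not affect summability of $B_k^p$ once $p$ is strictly large enough), so the analogous threshold $p > 1/\min\{1+q,2\gamma\}$ gives $p = \lfloor 1/\min\{1+q,2\gamma\}\rfloor + 1$, matching the stated bound $M \geq 2\lfloor 1/\min\{1+q,2\gamma\}\rfloor + 2$.

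The main technical point to handle carefully is the boundary/logarithmic case $\alpha=1$, $2\gamma = 1+q$, where $B_k = (\log k)/k^{1+q}$ and thus $B_k^p = (\log k)^p/k^{p(1+q)}$; here one must verify that choosing $p = \lfloor 1/(1+q)\rfloor + 1$ still forces $p(1+q) > 1$ strictly, so that the series $\sum_k (\log k)^p/k^{p(1+q)}$ converges despite the logarithmic factor (a strictly superlinear polynomial power absorbs any fixed power of $\log k$). I expect this verification to be the main obstacle, since one must confirm that the floor-plus-one choice never lands exactly on the critical exponent $p(1+q)=1$; this is where the careful definition of $M$ in the hypothesis is doing its work, and a short case check that $\lfloor 1/r\rfloor + 1 > 1/r$ strictly for any $r>0$ (equivalently that the chosen $p$ always makes the relevant exponent exceed $1$) closes the argument in all regimes simultaneously.
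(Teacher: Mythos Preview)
Your proposal is correct and is essentially the same argument as the paper's: both apply Markov's inequality at order $2p$ together with the moment bound from Lemma~\ref{lemma:convp} to get a summable tail $\sum_k O(B_k^p)/\delta^{2p}$, then conclude a.s.\ convergence (you phrase it via Borel--Cantelli, the paper writes out the equivalent union bound on $P[\sup_{k\geq k_0}\|x_k-x^*\|\geq\delta]$ directly). Your case analysis of the threshold $p=\lfloor 1/r\rfloor+1$ and the logarithmic boundary case is in fact more explicit than the paper's, which simply asserts that for the stated $M$ ``there exists $p$ such that the summation above converges.''
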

\begin{proof}
Almost sure convergence is equivalent to:
\begin{equation}
(\forall \delta>0)\text{ } \lim_{k_0 \rightarrow \infty} P\left[\sup_{k\geq k_0} ||x_k - x^*|| \geq \delta \right] = 0. \label{eq:as}
\end{equation}

Now,
\lqq P\left[\sup_{k\geq k_0} ||x_k - x^*|| \geq \delta \right] = P\left[\bigvee_{k\geq k_0} ||x_k - x^*|| \geq \delta \right] \leq \rqq
\lqq \sum_{k\geq k_0}P\left[||x_k - x^*|| \geq \delta \right] \leq \sum_{k\geq k_0}\frac{E\left[||x_k - x^*||^{2p} \right]}{\delta^{2p}} \leq \sum_{k\geq k_0}\frac{||D||_2^p E\left[||x_k - x^*||_{D^{-1}}^{2p} \right]}{\delta^{2p}}, \rqq
which by Lemma~\ref{lemma:convp}, satisfies for $p\leq\lfloor M/2\rfloor$:
\lqq P\left[\sup_{k\geq k_0} ||x_k - x^*|| \geq \delta \right] \leq \sum_{k\geq k_0}\frac{O(B_k^p)}{\delta^{2p}}. \rqq

However, if $M$ is high enough ($M \geq 2\lfloor \frac1{\alpha+q} \rfloor+2$ if $\alpha < 1$, or $M \geq 2\lfloor \frac1{\min\{1+q,2\gamma\}} \rfloor+2$ if $\alpha=1$), there exists $p$ such that the summation above converges, which means
\lqq \lim_{k_0 \rightarrow \infty} \sum_{k\geq k_0}\frac{O(B_k^p)}{\delta^{2p}} = 0, \rqq
thus proving Equation~\ref{eq:as}.
\end{proof}

\begin{theorem}
\label{thm:ip}
Regarding Equation~\ref{eq:tur}, assume Assumptions \ref{ass:convex}, \ref{ass:moms}, \ref{ass:aknk} and \ref{ass:var}, with $\alpha=1$, $D=(1+q)S^{-1}$, where $S=\nabla f^2(x^*)$, and $M \geq 2\lfloor \frac1{\min\{1+q,2\gamma\}} \rfloor+2$, where $\gamma = \inf_{x\neq x^*} \langle x-x^*, \nabla f(x)\rangle/||x-x^*||_{D^{-1}}^2$. Then $E[(x_{k+1}-x^*)(x_{k+1}-x^*)^T] \sim \frac{S^{-1}\Sigma^2S^{-1}}{\sum_{i=1}^k N_k}$.
\end{theorem}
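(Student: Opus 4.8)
The plan is to establish the result as a local central limit theorem for the iterates, using the almost sure convergence already proved in Lemma~\ref{thm:as} to reduce the nonlinear recursion to its linearization about $x^*$, and then invoking Fabian's theorem~\cite{fabian1968}. First I would set $u_k = x_k - x^*$ and, using that $f$ is $C^3$ near $x^*$ (Assumption~\ref{ass:C3}), Taylor-expand the drift as $\nabla f(x_k) = S u_k + O(\|u_k\|^2)$ with $S = \nabla^2 f(x^*)$. Substituting into Equation~\ref{eq:tur} with $A_k = a_k D$, $a_k = (k+c)^{-1}$ and $D = (1+q)S^{-1}$, and using $DS = (1+q)I$, gives $u_{k+1} = (I - a_k(1+q)I)u_k - a_k D\mathcal{E}_k - a_k(1+q)S^{-1}O(\|u_k\|^2)$.

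Next I would cast this in Fabian's canonical form $U_{k+1} = (I - k^{-1}\Gamma_k)U_k + k^{-(1+\beta)/2}\Phi_k V_k + k^{-1-\beta/2}T_k$. Matching the linear part identifies $\Gamma = (1+q)I$. For the noise, Assumption~\ref{ass:var} together with $N_k \sim N_1 k^q$ (Assumption~\ref{ass:aknk}) gives $E[\mathcal{E}_k\mathcal{E}_k^T \mid x_k] \sim \Sigma^2/(N_1 k^q)$, so writing $\mathcal{E}_k = (N_1 k^q)^{-1/2}V_k$ with $V_k$ of limiting conditional covariance $\Sigma^2$ makes $-a_k D\mathcal{E}_k$ scale as $k^{-(1+q/2)}$; this pins the exponent at $\beta = 1+q$ and sets $\Phi = -N_1^{-1/2}D$. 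The moment bounds of Assumption~\ref{ass:moms} supply the conditional-moment and uniform-integrability hypotheses Fabian imposes on $V_k$, while the nonlinear remainder, being $O(a_k\|u_k\|^2)$, falls into the $T_k$ term and vanishes asymptotically by a.s. convergence. The crucial eigenvalue condition $\beta/2 < \lambda_{\min}(\Gamma)$ reads $\tfrac{1+q}{2} < 1+q$, which holds; this is precisely what the preconditioner choice $D=(1+q)S^{-1}$ buys us, by forcing the local drift to be the scalar multiple $(1+q)I$.

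Fabian's theorem then yields $k^{\beta/2}u_k \Rightarrow N(0,\mathcal{V})$, with $\mathcal{V}$ solving the Lyapunov equation $(\Gamma - \tfrac\beta2 I)\mathcal{V} + \mathcal{V}(\Gamma - \tfrac\beta2 I) = \Phi\Sigma^2\Phi^T$. Because $\Gamma - \tfrac\beta2 I = \tfrac{1+q}{2}I$ is a scalar matrix, this collapses to $(1+q)\mathcal{V} = N_1^{-1}D\Sigma^2 D = N_1^{-1}(1+q)^2 S^{-1}\Sigma^2 S^{-1}$, i.e. $\mathcal{V} = \tfrac{1+q}{N_1}S^{-1}\Sigma^2 S^{-1}$. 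Hence $E[u_k u_k^T] \sim k^{-(1+q)}\mathcal{V}$, and since $\sum_{i=1}^k N_i \sim \tfrac{N_1}{1+q}k^{1+q}$, the two prefactors coincide and we recover $E[(x_{k+1}-x^*)(x_{k+1}-x^*)^T] \sim \tfrac{S^{-1}\Sigma^2 S^{-1}}{\sum_{i=1}^k N_i}$, as claimed.

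The hard part will be the two places where rigor is most delicate. First, I must verify that the linearization remainder genuinely satisfies Fabian's negligibility hypothesis, which requires turning the a.s. bound on $\|u_k\|$ (and a localization to a neighborhood of $x^*$, where the effective contraction is the local rate $1+q$ rather than the global $\gamma$) into a controlled $T_k\to 0$. Second, I must upgrade Fabian's convergence in distribution to convergence of the second moment $E[u_ku_k^T]$; this needs uniform integrability of $k^{1+q}\|u_k\|^2$, which is exactly why a high number of finite moments is assumed — the bound $E[\|u_k\|^{2p}] = O(B_k^p)$ from Lemma~\ref{lemma:convp}, valid up to $p \leq M/2$ with $M$ as large as stated, furnishes the $(1+\epsilon)$-moment control that closes this gap.
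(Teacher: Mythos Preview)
Your overall strategy matches the paper's: establish a.s.\ convergence via Lemma~\ref{thm:as} and then invoke Fabian's Theorem~2.2~\cite{fabian1968}. The main difference is in how the nonlinearity of $\nabla f$ is handled, and here your proposal has a genuine gap.

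You invoke Assumption~\ref{ass:C3} ($f$ is $C^3$ near $x^*$) to Taylor-expand $\nabla f(x_k) = Su_k + O(\|u_k\|^2)$ and push the remainder into Fabian's $T_k$ term. But Assumption~\ref{ass:C3} is \emph{not} among the hypotheses of this theorem --- the statement assumes only Assumptions~\ref{ass:convex}--\ref{ass:var}. Under Assumption~\ref{ass:convex} alone you cannot write $\nabla f(x_k)=Su_k+O(\|u_k\|^2)$; you only get $\nabla f(x_k)=Su_k+o(\|u_k\|)$ at best. The paper sidesteps this entirely by using the exact integral mean-value form
\[
\nabla f(x_k)=\Bigl(\int_0^1\nabla^2 f\bigl(x^*+\tau(x_k-x^*)\bigr)\,d\tau\Bigr)(x_k-x^*),
\]
and absorbing the whole bracket into a \emph{random} $\Gamma_k=D\int_0^1\nabla^2 f(x^*+\tau(x_k-x^*))\,d\tau$, which converges a.s.\ to $DS=(1+q)I$ by Lemma~\ref{thm:as}. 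Fabian's framework permits random $\Gamma_k\to\Gamma$, so no remainder term $T_k$ appears at all --- and your ``first hard part'' simply evaporates. Your route can be repaired by doing the same thing, but as written it relies on an assumption you were not given.

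On your ``second hard part'': the paper reads Fabian's conclusion as yielding convergence of the first two moments of $k^{(1+q)/2}(x_k-x^*)$ directly. If one takes Fabian's theorem as delivering only asymptotic normality, your uniform-integrability argument via the moment bounds $E[\|x_k-x^*\|^{2p}]=O(B_k^p)$ of Lemma~\ref{lemma:convp} is the right bridge, and on this point you are more explicit than the paper.
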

\begin{proof}
By Lemma~\ref{thm:as}, $x_k\rightarrow x^*$ almost surely. Then we may apply a theorem by Fabian (Theorem 2.2 in \cite{fabian1968}) with (Fabian's notation in the left side, our notation in the right) $U_k = x_k-x^*$, $\Gamma_k=D\int_{\tau=0}^1\nabla^2f(x^*+\tau(x-x^*))d\tau$, $\alpha=\alpha$, $\Phi_k=D$, $\beta=q+\alpha$, $\Sigma=\Sigma^2/N_1$, $V_k=k^{-q/2}\mathcal{E}_k$, $\Lambda = (1+q)I$, $\beta_+ = 1+q$ and $P=I$, which produces:
\lqq E[k^{(1+q)/2}(x_k-x^*)] \rightarrow 0, \text{ and} \rqq
\lqq \text{Var}[k^{(1+q)/2}(x_k-x^*)] \rightarrow (1+q)S^{-1}\Sigma^2S^{-1}/N_1, \rqq
implying
\lqq E[(x_{k+1}-x^*)(x_{k+1}-x^*)^T] \sim \frac{S^{-1}\Sigma^2S^{-1}}{\sum_{i=1}^k N_k}. \rqq
\end{proof}

\begin{lemma}
\label{lemma:limsuplog}
Regarding Equation~\ref{eq:tur}, assume Assumptions \ref{ass:convex}, \ref{ass:moms}, \ref{ass:aknk}, with $\alpha < 1$. Then,
\lqq \limsup_{k\rightarrow\infty} \frac{k^{\alpha+q}||x_k-x^*||^2}{k^h} = 0 \text{ (a.s.)} \rqq
for all $h > \frac1{\lfloor M/2 \rfloor}$.
\end{lemma}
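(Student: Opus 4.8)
The plan is to combine the polynomial moment bound of Lemma~\ref{lemma:convp} with Markov's inequality and the Borel--Cantelli lemma, which is essentially the same pattern already used to establish almost-sure convergence in Lemma~\ref{thm:as}. Since we are in the regime $\alpha < 1$, Lemma~\ref{lemma:convp} gives $B_k = 1/k^{\alpha+q}$, so that for every non-negative integer $p \leq M/2$ we have $E[||x_k-x^*||_{D^{-1}}^{2p}] = O(k^{-(\alpha+q)p})$; by the norm equivalence $||u||^2 \leq ||D||_2\,||u||_{D^{-1}}^2$ (as used in Lemma~\ref{thm:as}) the same bound holds, up to the constant $||D||_2^p$, for $E[||x_k-x^*||^{2p}]$. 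I would fix the largest admissible exponent $p = \lfloor M/2 \rfloor$, as this is exactly what makes the resulting series converge under the weakest possible hypothesis on $h$.

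Next, for an arbitrary $\epsilon > 0$ and a fixed $h > 1/\lfloor M/2 \rfloor$, I would apply Markov's inequality to the event $||x_k-x^*||^2 \geq \epsilon\,k^{h-\alpha-q}$, rewritten as $||x_k-x^*||^{2p} \geq \epsilon^p k^{(h-\alpha-q)p}$. This yields $P[\,||x_k-x^*||^2 \geq \epsilon\,k^{h-\alpha-q}\,] \leq E[||x_k-x^*||^{2p}]/(\epsilon^p k^{(h-\alpha-q)p}) = O(\epsilon^{-p} k^{-hp})$, where the exponent $-(\alpha+q)p - (h-\alpha-q)p = -hp$ emerges after cancellation. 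The purpose of taking $p = \lfloor M/2 \rfloor$ is precisely that $hp > 1$ if and only if $h > 1/\lfloor M/2 \rfloor$, so the series $\sum_k k^{-hp}$ converges under the stated hypothesis.

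The Borel--Cantelli lemma then implies that, almost surely, only finitely many of these events occur, i.e. $||x_k-x^*||^2 < \epsilon\,k^{h-\alpha-q}$ for all sufficiently large $k$, which is equivalent to $\limsup_{k\rightarrow\infty} k^{\alpha+q}||x_k-x^*||^2/k^h \leq \epsilon$ almost surely. Finally, to upgrade the bound ``$\leq \epsilon$'' to ``$=0$'', I would intersect this almost-sure event over the countable sequence $\epsilon = 1/n \downarrow 0$; the union of the countably many exceptional null sets remains null, so the limsup equals $0$ almost surely, as claimed.

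I expect no serious obstacle here, since the argument is routine once Lemma~\ref{lemma:convp} is available. The only points demanding care are the exponent bookkeeping --- verifying that the cancellation produces exactly $k^{-hp}$, so that the summability threshold is precisely the stated $h > 1/\lfloor M/2 \rfloor$ --- and the passage from the fixed-$\epsilon$ conclusion to the full statement that the limsup vanishes, achieved through the countable intersection over $\epsilon = 1/n$.
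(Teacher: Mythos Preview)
Your proposal is correct and follows essentially the same approach as the paper: both combine the moment bound $E[||x_k-x^*||^{2p}]=O(k^{-(\alpha+q)p})$ from Lemma~\ref{lemma:convp} (in the regime $\alpha<1$, where $B_k=k^{-(\alpha+q)}$) with Markov's inequality and a Borel--Cantelli/union-bound argument, yielding the summability condition $hp>1$ for $p=\lfloor M/2\rfloor$. The paper phrases the last step as bounding $P[\sup_{k\ge k_0}\cdots\ge\delta^2]$ by the tail of the same series (as in Lemma~\ref{thm:as}), while you invoke Borel--Cantelli directly and then intersect over $\epsilon=1/n$; these are equivalent formulations of the same argument.
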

\begin{proof}
The proof is established similarly as Lemma~\ref{thm:as}. By Lemma~\ref{lemma:convp}, for all $p \leq \lfloor M/2 \rfloor$, we may write:
\lqq P\left[\sup_{k\geq k_0} \frac{k^{\alpha+q}||x_k-x^*||^2}{k^h} \geq \delta^2 \right] \leq \sum_{k\geq k_0} \frac{k^{(\alpha+q)p}O(B_k^p)}{k^{hp}\delta^{2p}} = \sum_{k\geq k_0} \frac{O(1)}{k^{hp}\delta^{2p}},\rqq
which converges for $hp>1$ (hence for $h>\frac1{\lfloor M/2 \rfloor}$, there exists $p$ such that the summation above converges). Thus, $\frac{k^{\alpha+q}||x_k-x^*||^2}{k^h}$ converges to zero a.s., which proves the lemma.
\end{proof}

\begin{theorem}
\label{thm:ipa}
Regarding Equation~\ref{eq:tur}, assume Assumptions \ref{ass:convex}, \ref{ass:moms}, \ref{ass:aknk}, \ref{ass:var} and \ref{ass:C3}, $\frac{1-q}2 < \alpha < 1$, and $M$ that satisfies $M \geq 2\lfloor \frac1{\alpha+q} \rfloor+2$ and $M \geq 2\lfloor \frac1{\alpha-(1-q)/2} \rfloor+2$. Let
\lqq \bar x_{k+1} = \frac{\sum_{i=1}^k N_i x_{i+1}}{\sum_{i=1}^k N_i}. \rqq

Then $E[(\bar x_{k+1}-x^*)(\bar x_{k+1}-x^*)^T] \sim \frac{S^{-1}\Sigma^2S^{-1}}{\sum_{i=1}^k N_i}$ as $k\rightarrow\infty$.
\end{theorem}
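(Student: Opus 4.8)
The plan is to follow the Ruppert--Polyak averaging methodology (\cite{avg2,avg1}): reduce the nonlinear iteration to a linear one, extract the averaged noise as the leading term, and show every other contribution is of strictly lower order. Write $U_k = x_k - x^*$ and $t_k = \sum_{i=1}^k N_i$. Since $f$ is $C^3$ near $x^*$ with $\nabla f(x^*)=0$ and $\nabla^2 f(x^*)=S$ (Assumptions~\ref{ass:convex} and~\ref{ass:C3}), I expand $\nabla f(x_k) = S U_k + \delta_k$ with nonlinear remainder $\|\delta_k\| = O(\|U_k\|^2)$ in a neighborhood of $x^*$. Substituting into the update rule \eqref{eq:tur} gives $U_{k+1} = (I - a_k D S)U_k - a_k D\delta_k - a_k D\mathcal{E}_k$, which I rearrange into $U_k = \tfrac{1}{a_k}S^{-1}D^{-1}(U_k - U_{k+1}) - S^{-1}\delta_k - S^{-1}\mathcal{E}_k$. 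The a.s. convergence $x_k\to x^*$ established in Lemma~\ref{thm:as} makes the local expansion legitimate for all large $k$.

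Multiplying by the weights $N_i$ and summing decomposes $\sum_{i=1}^k N_i U_i$ into three contributions: a telescoping term $\sum_i \tfrac{N_i}{a_i}S^{-1}D^{-1}(U_i-U_{i+1})$, the nonlinear term $-S^{-1}\sum_i N_i\delta_i$, and the noise term $-S^{-1}\sum_i N_i\mathcal{E}_i$. With $a_k=(k+c)^{-\alpha}$ and $N_k\sim N_1 k^q$, the coefficients $c_i = N_i/a_i \sim N_1 i^{\alpha+q}$ vary slowly, so I treat the telescoping term by summation by parts (Abel summation); the resulting boundary term $c_k U_{k+1}$ and coefficient-difference terms $\sum_i (c_i-c_{i-1})U_i$ are each controlled by $\|U_i\|$ via the a.s. bound of Lemma~\ref{lemma:limsuplog} and turn out to be $o(\sqrt{t_k})$. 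The harmless index shift between the summed $U_i$ and the $U_{i+1}$ appearing in $\bar x_{k+1}$ is absorbed by the same summation-by-parts estimate.

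The dominant contribution is the noise term. Because $\mathcal{E}_i$ is a martingale difference ($E[\mathcal{E}_i|\mathcal{F}_{i-1}]=0$), the cross terms vanish and $\mathrm{Var}\!\left[\sum_i N_i\mathcal{E}_i\right] = \sum_i N_i^2 E[\mathcal{E}_i\mathcal{E}_i^T]$. By Assumption~\ref{ass:var}, $N_i E[\mathcal{E}_i\mathcal{E}_i^T|x_i]\to\Sigma^2$, and a Toeplitz/Cesàro argument with the weights $N_i/t_k$ (summing to one) gives $\tfrac{1}{t_k}\sum_i N_i^2 E[\mathcal{E}_i\mathcal{E}_i^T]\to\Sigma^2$, so $\mathrm{Var}\!\left[S^{-1}\sum_i N_i\mathcal{E}_i\right]\sim t_k\, S^{-1}\Sigma^2 S^{-1}$. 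Dividing by $t_k^2$, the normalizing weight squared, produces the claimed $E[\bar U_{k+1}\bar U_{k+1}^T]\sim \frac{S^{-1}\Sigma^2S^{-1}}{t_k}$, provided the other two contributions are $o(\sqrt{t_k})$ and hence negligible at leading order.

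The main obstacle is the nonlinear term $\sum_i N_i\delta_i$ with $\|\delta_i\|=O(\|U_i\|^2)$. Using the a.s. decay $\|U_i\|^2 = o(i^{\,h-\alpha-q})$ from Lemma~\ref{lemma:limsuplog}, valid for every $h > 1/\lfloor M/2\rfloor$, I bound $\sum_i N_i\|\delta_i\| = o\!\left(\sum_i i^{\,h-\alpha}\right) = o(k^{\,1-\alpha+h})$, which is $o(\sqrt{t_k}) = o(k^{(q+1)/2})$ exactly when $h < \alpha - (1-q)/2$. The hypotheses $\tfrac{1-q}{2}<\alpha$ and $M\geq 2\lfloor \tfrac{1}{\alpha-(1-q)/2}\rfloor+2$ are precisely what guarantee that such an $h$ exists, while the companion bound $M\geq 2\lfloor\tfrac1{\alpha+q}\rfloor+2$ is what lets me invoke Lemmas~\ref{thm:as} and~\ref{lemma:limsuplog} at all. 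The final subtlety is passing from these almost-sure order estimates to the expectation $E[\bar U_{k+1}\bar U_{k+1}^T]$; this I handle through the polynomial moment bounds of Lemma~\ref{lemma:convp}, which provide the uniform integrability needed to upgrade the a.s. statements to convergence in $L^2$.
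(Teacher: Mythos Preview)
Your approach is genuinely different from the paper's. The paper follows Ruppert's route: it introduces a \emph{linearized} companion process $x^{lin}_{k+1}=x^{lin}_k-a_kD(S(x^{lin}_k-x^*)+\mathcal E_k)$, bounds $\|x_k-x^{lin}_k\|$ via Chung's lemma and Lemma~\ref{lemma:limsuplog}, and then computes the covariance of $\bar x^{lin}_k$ explicitly through the transition products $F_{k+1}=\prod_j(I-a_jDS)^{-1}$ and an integral calculation. Your decomposition --- rearranging the recursion into $U_k=\tfrac{1}{a_k}S^{-1}D^{-1}(U_k-U_{k+1})-S^{-1}\delta_k-S^{-1}\mathcal E_k$ and Abel-summing --- is the Polyak--Juditsky style. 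It is more elementary (no transition matrices, no asymptotic integrals) and isolates the martingale $\sum N_i\mathcal E_i$ directly; the paper's version, by contrast, makes the linear/nonlinear comparison the single place where Lemma~\ref{lemma:limsuplog} is used, so its only constraint on $h$ is $h\le\alpha-(1-q)/2$.

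There is, however, a real gap in your telescoping step. You claim the boundary term $c_kU_{k+1}$, with $c_k=N_k/a_k\sim k^{\alpha+q}$, is $o(\sqrt{t_k})$ ``via the a.s.\ bound of Lemma~\ref{lemma:limsuplog}.'' That lemma gives $\|U_{k+1}\|=o(k^{(h-\alpha-q)/2})$, so $c_k\|U_{k+1}\|=o(k^{(\alpha+q+h)/2})$, and you need $\alpha+q+h<q+1$, i.e.\ $h<1-\alpha$. But the theorem's moment hypotheses only guarantee that $h$ can be taken below $\alpha-(1-q)/2$; when $\alpha>(3-q)/4$ (e.g.\ $q=0$, $\alpha=0.9$) no admissible $h$ satisfies $h<1-\alpha$, and the a.s.\ argument fails. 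The same issue affects $\sum_i(c_i-c_{i-1})U_i$. The fix is to control the Abel terms in $L^2$ directly from Lemma~\ref{lemma:convp} (which gives $E\|U_k\|^2=O(k^{-(\alpha+q)})$ with no extra moment cost): then $\|c_kU_{k+1}\|_{L^2}=O(k^{(\alpha+q)/2})=o(\sqrt{t_k})$ for all $\alpha<1$, and the triangle inequality handles the coefficient-difference sum similarly. Reserve Lemma~\ref{lemma:limsuplog} for the nonlinear term $\sum N_i\delta_i$, where your computation and the constraint $h<\alpha-(1-q)/2$ are exactly right.
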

\begin{proof}
The proof is similar to the one in Ruppert's work on the averaged Robbins-Monro procedure~\cite{avg2}.

Consider a linearized version of the problem, i.e. $x^{lin}$ and $\bar{x}^{lin}$ following
\lqq x^{lin}_{k+1} = x^{lin}_k - a_kD(S(x^{lin}_k-x^*) + \mathcal{E}_k), \text{ }\text{ }\text{ } \bar{x}^{lin}_{k+1} = \frac{\sum_{i=1}^k N_i x_{i+1}^{lin}}{\sum_{i=1}^k N_i}. \rqq
where $\mathcal{E}_k = g_k - \nabla f(x_k)$ (from the nonlinear process).

Subtracting the recurrences of the linear and nonlinear cases, we have
\begin{align}
x^{lin}_{k+1}-x_{k+1} &= (I-a_kDS)(x^{lin}_k-x_k) + a_kD(\nabla f(x_k-x^*)-S(x_k-x^*)) \nonumber \\
&= (I-a_kDS)(x^{lin}_k-x_k) + O(a_k||x_k-x^*||^2), \nonumber
\end{align}
which, by Lemma \ref{lemma:limsuplog}, implies
\lqq ||x^{lin}_{k+1}-x_{k+1}|| \leq ||I-a_kDS||_2||x^{lin}_k-x_k|| + Ck^{-2\alpha-q}k^h \rqq
for some $C$ after $k$ is sufficiently high, with $h > \frac1{\lfloor M/2 \rfloor}$. Then by Chung's lemma (\cite{chung}, Lemma 4),
\lqq ||x_k-x^{lin}_k|| = O((k^{-2\alpha-q}k^h)/a_k) = o(k^h/k^{\alpha+q}), \rqq
implying
\lqq ||\bar{x}_k-\bar{x}^{lin}_k|| = O\left(\frac{\sum_ii^qi^h/i^{\alpha+q}}{\sum_ii^q}\right) = o(k^h / k^{\alpha+q}) = o(k^{-(1+q)/2}), \rqq
since the constraint $M \geq 2\lfloor \frac1{\alpha-(1-q)/2} \rfloor+2$ allows for the existence of $h$ such that $h \leq \alpha - (1-q)/2$.

Thus, we can write:
\lqq E\left[(\bar{x}_{k+1}-x^*)(\bar{x}_{k+1}-x^*)^T\right] =  \rqq
\lqq E\left[\left(\bar{x}^{lin}_{k+1}-x^* + o\left(k^{-\frac{1+q}2}\right)\right)\left(\bar{x}^{lin}_{k+1}-x^* + o\left(k^{-\frac{1+q}2}\right)\right)^T\right] =  \rqq
\lqq E\left[(\bar{x}^{lin}_{k+1}-x^*)(\bar{x}^{lin}_{k+1}-x^*)^T\right] + o\left(k^{-\frac{1+q}2}E\left[\left\|\bar{x}^{lin}_{k+1}-x^*\right\|\right]\right) + o(k^{-(1+q)}) \rqq
\begin{equation}
= E\left[(\bar{x}^{lin}_{k+1}-x^*)(\bar{x}^{lin}_{k+1}-x^*)^T\right] + o\left(k^{-\frac{1+q}2}\sqrt{E\left[\left\|\bar{x}^{lin}_{k+1}-x^*\right\|^2\right]}\right) + o(k^{-(1+q)}).
\label{eq:covasymp}
\end{equation}

Let now, for some high enough $k_0$ such that $||a_{k_0}DS||_2 < 1$,
\lqq F_{k+1} = (I - a_{k_0}DS)^{-1}(I - a_{k_0+1}DS)^{-1}...(I - a_kDS)^{-1}. \rqq

We may then write, for $k\geq k_0$:
\lqq F_{k+1}(x^{lin}_{k+1}-x^*) = F_k(x^{lin}_k-x^*) - a_kF_{k+1}D\mathcal{E}_k \Rightarrow \rqq
\lqq x^{lin}_{k+1}-x^* = F_{k+1}^{-1}F_{k_0}(x^{lin}_{k_0}-x^*) - \sum_{i=k_0}^{k}a_iF_{k+1}^{-1}F_{i+1}D\mathcal{E}_i \Rightarrow\rqq
\lqq \bar x^{lin}_{k+1}-x^* = O\left(1/\sum_{i=1}^{k}N_i\right) - \frac{\sum_{j=k_0}^k\sum_{i=k_0}^{j}N_ja_iF_{j+1}^{-1}F_{i+1}D\mathcal{E}_i}{\sum_{i=1}^k N_i}. \rqq

Thus,
\lqq \lim_{k\rightarrow\infty}\left(\sum_{j=1}^{k}N_j\right)E\left[(\bar{x}^{lin}_{k+1}-x^*)(\bar{x}^{lin}_{k+1}-x^*)^T\right] =  \rqq
\lqq \lim_{k\rightarrow\infty} \frac{\sum_{i=k_0}^{k}a_i^2\left(\sum_{j=i}^kN_jF_{j+1}^{-1}\right)F_{i+1}DE[\mathcal{E}_i\mathcal{E}_i^T]DF_{i+1}^T\left(\sum_{j=i}^kN_jF_{j+1}^{-1}\right)^T}{\sum_{i=1}^k N_i} = \rqq
\lqq \lim_{k\rightarrow\infty} \frac{q+1}{k^{q+1}}\int_{k_0}^{k}i^{-2\alpha}\left(\int_i^kj^q\Phi(i,j)dj\right)\frac{D\Sigma^2D}{i^q}\left(\int_i^kj^q\Phi(i,j)dj\right)^Tdi\rqq
where $\Phi(i,j) = e^{-DS\frac{j^{1-\alpha}-i^{1-\alpha}}{1-\alpha}}$. Using then that
\lqq\int_i^kj^q\Phi(i,j)dj = \left(\left(i^{q+\alpha}+o(i^{q+\alpha})\right)I - \left(k^{q+\alpha}+o(k^{q+\alpha})\right)\Phi(i,k)\right)(DS)^{-1},\rqq
we obtain:
\lqq \lim_{k\rightarrow\infty}\left(\sum_{j=1}^{k}N_j\right)E\left[(\bar{x}^{lin}_{k+1}-x^*)(\bar{x}^{lin}_{k+1}-x^*)^T\right] =  \rqq
\lqq \lim_{k\rightarrow\infty} \frac{q+1}{k^{q+1}}\int_{k_0}^{k}i^{-2\alpha}\left(i^{q+\alpha}I - k^{q+\alpha}\Phi(i,k)\right)\frac{S^{-1}\Sigma^2S^{-1}}{i^q}\left(i^{q+\alpha}I - k^{q+\alpha}\Phi(i,k)\right)^Tdi =\rqq
\lqq \lim_{k\rightarrow\infty} \frac{q+1}{k^{q+1}}\int_{k_0}^{k}i^{-2\alpha}i^{q+\alpha}\frac{S^{-1}\Sigma^2S^{-1}}{i^q}i^{q+\alpha}di = S^{-1}\Sigma^2S^{-1}. \rqq

Thus, $E\left[\left\|\bar{x}^{lin}_{k+1}-x^*\right\|^2\right] = O(k^{-(1+q)})$ and by Equation~\ref{eq:covasymp}, we have
\lqq E\left[(\bar{x}_{k+1}-x^*)(\bar{x}_{k+1}-x^*)^T\right] = E\left[(\bar{x}^{lin}_{k+1}-x^*)(\bar{x}^{lin}_{k+1}-x^*)^T\right] + o(k^{-(1+q)}) \rqq
\lqq \sim \frac{S^{-1}\Sigma^2S^{-1}}{\sum_{j=1}^{k}N_j}. \rqq
\end{proof}
 
\begin{remark}
The constraint $M \geq 2\lfloor \frac1{\alpha-(1-q)/2} \rfloor+2$ on Theorem~\ref{thm:ipa} is probably unnecessary, if Lemmas 5.2 and 5.3 in Ruppert's work~\cite{avg2} can be generalized to the IP framework. In this case, we would be able to replace $o(k^h)$ with $O(\log k)$ throughout the proof of Theorem~\ref{thm:ipa}, turning the constraint unnecessary.
\end{remark}

\subsection{Asymptotic analysis of the IP-SGD hybrid}
\label{sec:asym-hybrid}
Here we analyze the performance of the IP-SGD hybrid method in a very simplified case where $\uhat J(x) = J$ (constant with respect to $x$, and deterministic), $S=J^TJ$ and $\uhat Q(x_t) = Jy_t + \mathcal{E}_t$, where $y_t = x_t - x^*$, and $\text{Var}[\mathcal{E}_t] = \Sigma^2$, with $E[\mathcal{E}_t| y_1, ..., y_t,\mathcal{E}_1, ..., \mathcal{E}_{t-1}]=0$. Note that under these conditions both IP and SGD would have the same performance, as $\Sigma_B^2 = 0$. Therefore, this analysis cannot prove that the IP-SGD hybrid method performs better than SGD, but only provide sufficient conditions to when it is not \textit{worse}. Particularly, it cannot evince the fact that performance worsens when $\zeta_t$ is asymptotically constant, as was seen in the experiments of Section~\ref{sec:hybrid-exp}, since this case is somewhat equivalent to ``constant'' precision as in SGD.

Assume $A_t = \eta S^{-1}/t$, for some $\eta > 0$. The hybrid method in this case is described by the recurrence:
\lqq \left[\begin{matrix} y_{t+1} \\ \mu_{t+1} \end{matrix}\right] = (I-P_t) \left[\begin{matrix} y_{t} \\ \mu_{t} \end{matrix}\right] + R_t S^{-1}J^T\mathcal{E}_t, \rqq
\lqq P_t=\left[\begin{matrix} \frac{\eta}{2t}I & \frac{\eta}{2t}I \\ -\zeta_tI & \zeta_tI \end{matrix}\right], \text{ } R_t = \left[\begin{matrix} -\frac{\eta}{2t}I \\ \zeta_tI\end{matrix}\right], \rqq
where $\mu_t \triangleq S^{-1}J^T\bar Q_t$.

Let then $W_t = E\left[\left[\begin{matrix} y_{t} \\ \mu_{t} \end{matrix}\right]\left[\begin{matrix} y_{t} \\ \mu_{t} \end{matrix}\right]^T\right]$. $W_t$ can be calculated by the recurrence:
\lqq W_{t+1} = (I-P_t)W_t(I-P_t) + R_tS^{-1}\Sigma_A^2S^{-1} R_t^T, \rqq
noting that $\Sigma_A^2 = J^T\Sigma^2J$.

This recurrence takes different behaviors depending on $\zeta_t$. If $\zeta_t \sim (1+p)/t$, for $p\geq0$, then $E[y_ty_t^T]\sim \frac{a}{t}S^{-1}\Sigma_A^2S^{-1}$, where
\lqq a = e_1^TU\left((U^{-1}rr^TU^{-T})\circ\left[\begin{matrix} \frac{1}{2\lambda_1 - 1} & \frac{1}{\lambda_1 + \lambda_2 - 1} \\ \frac{1}{\lambda_1 + \lambda_2 - 1} & \frac{1}{2\lambda_2 - 1} \end{matrix}\right]\right)U^{T}e_1, \rqq
where
\lqq U\left[\begin{matrix} \lambda_1 & 0 \\ 0 & \lambda_2 \end{matrix}\right]U^{-1} = \left[\begin{matrix} \eta/2 & \eta/2 \\ -(p+1) & p+1 \end{matrix}\right], \text{ } r = \left[\begin{matrix} -\eta/2 \\ p+1 \end{matrix}\right], \rqq
and ``$\circ$'' indicates componentwise product, assuming $\min\{\operatorname{Re}(\lambda_1), \operatorname{Re}(\lambda_2)\} > \frac12$. The solution to $a$ is:
\lqq a(\eta,p) = \frac{\eta^2(16p^2+(22+4\eta)p+8+3\eta)}{(32\eta-16)p^2+(16\eta^2+16\eta-8)p+12\eta^2-4}, \rqq
where the constraint $\min\{\operatorname{Re}(\lambda_1), \operatorname{Re}(\lambda_2)\} > \frac12$ implies that the denominator above must be positive\footnote{A sufficient condition is $\eta>\frac12$, $p \geq 0$.}. We may verify that under these constraints, $a(\eta,p)$ is minimized as $p\rightarrow\infty$ and $\eta\rightarrow1$, satisfying $\min_{\eta} a(\eta,p) > 1$ for all $p$ and $\lim_{p\rightarrow\infty}\arg\min_{\eta} a(\eta,p) = 1$, with:
\lqq \min_{\eta} a(\eta,p) \approx a(1,p) \approx 1 + \frac{1/8}{p+1/3}. \rqq

\subsubsection{Averaged case}
When averaging is considered, the performance analysis changes. Let now $A_t=\eta S^{-1}/t^\alpha$, $\frac12<\alpha<1$. The recurrence now writes as:
\lqq \left[\begin{matrix} \tilde y_{t+1} \\ \mu_{t+1} \\ y_{t} \end{matrix}\right] = (I-P_t) \left[\begin{matrix} \tilde y_{t} \\ \mu_{t} \\ y_{t-1} \end{matrix}\right] + R_t S^{-1}J^T\mathcal{E}_t, \rqq
\lqq P_t=\left[\begin{matrix} \frac{\eta}{2t^\alpha}I & \frac{\eta}{2t^\alpha}I & 0 \\ -\zeta_tI & \zeta_tI & 0 \\ -\frac I{t-1} & 0 & \frac I{t-1} \end{matrix}\right], \text{ } R_t = \left[\begin{matrix} -\frac{\eta}{2t^\alpha}I \\ \zeta_tI \\ 0\end{matrix}\right]. \rqq
where $\tilde y_t = \tilde x_t - x^*$. Once again, we write $W_t = E\left[\left[\begin{matrix} \tilde y_{t} \\ \mu_{t} \\ y_{t-1} \end{matrix}\right]\left[\begin{matrix} \tilde y_{t} \\ \mu_{t} \\ y_{t-1} \end{matrix}\right]^T\right]$, and obtain the recurrence:
\lqq W_{t+1} = (I-P_t)W_t(I-P_t) + R_tS^{-1}\Sigma_A^2S^{-1} R_t^T. \rqq

While this time we did not solve this equation analytically, we verified numerically that when $\zeta_t\sim(1+p)/t$, $p\geq 0$, it appears that $E[y_ty_t^T]\sim \frac{1}{t}S^{-1}\Sigma_A^2S^{-1}$ as $t\rightarrow\infty$, regardless of the choice of $p$ or $\eta$.

\subsection{Stability of Gauss-Newton estimators}
\label{sec:stability}
The difference in stability of different Gauss-Newton estimators can be explained in terms of the number of finite moments of the distribution of $B_t^{-1}$, as convergence requires a minimum number of finite moments. 

\begin{lemma}
Let $A\in\mathbb{R}^{m\times n}$ be a random matrix with $m\geq n$, satisfying $P[||A||_F > C] = 0$ for some $C$, and $\sup_{A\in\mathbb{R}^{m\times n}} \text{pdf}[A] < +\infty$. Then $(A^TA)^{-1}$ has a finite $p$-th moment for $p < \frac{m-n+1}2$.
\end{lemma}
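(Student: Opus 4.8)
The plan is to reduce the statement to a tail estimate for the smallest singular value of $A$. Write $\sigma_1 \geq \cdots \geq \sigma_n > 0$ for the singular values of $A$. Since $A$ has a bounded density, the rank-deficient matrices (a set of Lebesgue measure zero, as $m \geq n$) occur with probability zero, so $A$ has full column rank and $(A^TA)^{-1}$ is well-defined almost surely. The eigenvalues of $(A^TA)^{-1}$ are $\sigma_i^{-2}$, so $||(A^TA)^{-1}||_2 = \sigma_n^{-2}$; as all norms on $\mathbb{R}^{n\times n}$ are equivalent, finiteness of the $p$-th moment is equivalent to $E[\sigma_n^{-2p}] < +\infty$. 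Using the tail formula $E[X]=\int_0^\infty P[X>t]\,dt$ and the substitution $\epsilon = t^{-1/(2p)}$, one obtains
\lqq E[\sigma_n^{-2p}] = 2p\int_0^\infty P[\sigma_n < \epsilon]\,\epsilon^{-2p-1}\,d\epsilon. \rqq
Because $\sigma_n \leq ||A||_2 \leq ||A||_F \leq C$ almost surely, we have $P[\sigma_n < \epsilon] = 1$ for $\epsilon \geq C$, and the contribution of $\epsilon \geq C$ to the integral is the finite quantity $C^{-2p}$; everything therefore hinges on the behavior of $P[\sigma_n < \epsilon]$ as $\epsilon \to 0$.

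The core of the argument is the small-ball estimate $P[\sigma_n < \epsilon] = O(\epsilon^{m-n+1})$, which I would prove by an $\epsilon$-net argument over the sphere. First, for a fixed unit vector $v \in \mathbb{R}^n$, the linear map $A \mapsto Av$ pushes the density of $A$ forward to a density on $\mathbb{R}^m$ that is bounded \emph{uniformly in} $v$: by the coarea formula its value at $y$ equals $\int_{\{Av = y\}} \text{pdf}[A]\,d\mathcal{H}^{m(n-1)}$, the coarea Jacobian being $1$ since the rows of this map are supported on disjoint blocks of $A$ and hence orthonormal when $||v||=1$. This integral is at most $\sup\text{pdf}[A]$ times the $m(n-1)$-dimensional volume of the intersection of the (affine) fiber with the Frobenius ball of radius $C$, a quantity bounded by a constant depending only on $m,n,C$. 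Integrating this bounded density over $\{||y|| < \epsilon\}$ gives $P[||Av|| < \epsilon] \leq K\epsilon^m$ with $K$ independent of $v$. Next, taking an $\epsilon$-net $\mathcal{N}$ of the unit sphere $S^{n-1}$ with $|\mathcal{N}| \leq (3/\epsilon)^{n-1}$, the event $\sigma_n < \epsilon$ (that is, $||Av|| < \epsilon$ for some unit $v$) forces $||Av_0|| < (1+C)\epsilon$ at the nearest net point $v_0$, since $||A||_2 \leq ||A||_F \leq C$. A union bound then yields $P[\sigma_n < \epsilon] \leq (3/\epsilon)^{n-1} K((1+C)\epsilon)^m = O(\epsilon^{m-n+1})$. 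Geometrically this reflects that the determinantal variety of rank-deficient $m\times n$ matrices has codimension $m-n+1$, so its $\epsilon$-tube has volume $O(\epsilon^{m-n+1})$; the net argument makes this quantitative using only the bounded density and the support bound.

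Combining the two pieces, the integrand near zero is bounded by a constant times $\epsilon^{m-n-2p}$, which is integrable on $(0,C]$ precisely when $m-n-2p > -1$, i.e. $2p < m-n+1$. Together with the finite tail from $\epsilon \geq C$, this gives $E[\sigma_n^{-2p}] < +\infty$ for all $p < \frac{m-n+1}2$, completing the proof.

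The step I expect to be the main obstacle is the uniform small-ball bound $P[||Av|| < \epsilon] \leq K\epsilon^m$, specifically verifying that the pushforward density under $A \mapsto Av$ is bounded uniformly in $v$. This is where both hypotheses are genuinely needed — the bounded density controls the integrand and the compact support bounds the fiber volume — and the coarea/fiber-volume bookkeeping must be handled so that $K$ does not degenerate as $v$ ranges over the sphere. The remaining ingredients (the covering number of $S^{n-1}$, the union bound, and the final one-dimensional integral) are routine.
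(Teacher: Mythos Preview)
Your proof is correct and arrives at the same small-ball estimate $P[\sigma_{\min}<\epsilon]=O(\epsilon^{m-n+1})$ as the paper, but by a genuinely different route. The paper identifies $\sigma_{\min}(A)$ with the Frobenius distance from $A$ to the determinantal variety $\{\tilde A:\det(\tilde A^T\tilde A)=0\}$, observes that this variety has codimension $m-n+1$ in $\mathbb{R}^{m\times n}$, and then invokes (without further detail) that the $\epsilon$-tube of a codimension-$\alpha$ variety intersected with a bounded region has Lebesgue measure $O(\epsilon^\alpha)$; bounded density then converts volume to probability. You instead run an $\epsilon$-net over $S^{n-1}$: a coarea/pushforward argument gives a uniform bound $P[\|Av\|<\epsilon]\le K\epsilon^m$ for each unit $v$, and a union bound over a net of cardinality $O(\epsilon^{-(n-1)})$ yields the same $O(\epsilon^{m-n+1})$.

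What each buys: the paper's argument is short and conceptually transparent, but the tube-volume step is asserted rather than proved (making it rigorous requires something like a local parametrization of the variety or a Weyl-type tube estimate, and one must handle the singular locus). Your net argument is longer but fully self-contained and uses only elementary tools; it is also the standard device in non-asymptotic random-matrix theory, so it generalizes readily (e.g., to densities that are only locally bounded, or to structured matrix ensembles). The step you flagged as the main obstacle---uniformity in $v$ of the pushforward density bound---is handled correctly: the coarea Jacobian is exactly $1$ because the row blocks of the linear map $A\mapsto Av$ are orthonormal when $\|v\|=1$, and the fiber volume is bounded by the volume of a fixed $m(n-1)$-dimensional ball of radius $C$, independent of $v$ and of the image point.
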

\begin{proof}
We can reduce the problem of verifying if the moments of $(A^TA)^{-1}$ are finite to the problem of verifying if the moments of $||(A^TA)^{-1}||_2$ are finite, where $||.||_2$ is the induced L2 norm for matrices.  Note that $||(A^TA)^{-1}||_2 = \sigma_{\min}^{-2}$, where $\sigma_{\min} := \sigma_{\min}(A)$ is the smallest singular value of $A$. The $p$-th moment of this expression is then:
\lqq E\left[||(A^TA)^{-1}||_2^p\right] = \int_0^\infty \sigma_{\min}^{-2p} \text{pdf}[\sigma_{\min}] d\sigma_{\min} \rqq
\lqq \leq \frac1{r^{2p}} + \int_0^r s^{-2p} \frac d{ds}P[\sigma_{\min} < s] ds \text{ } \text{ } \text{ }(\forall r>0) \rqq

Now, $\sigma_{\min}$ may also be defined as the distance between $A$ and its closest matrix $\tilde A$ such that $\det(\tilde A^T\tilde A)=0$, i.e.:
\lqq \sigma_{\min} = \min_{\substack{ \text{ }\text{ }\text{ }\tilde A \in \mathbb{R}^{m\times n} \\ \text{s.t.: } \det(\tilde A^T\tilde A)=0 }} ||A-\tilde A||_F. \rqq
Since $\{\tilde A \in \mathbb{R}^{m\times n} | \det(\tilde A^T\tilde A)=0 \}$ is an algebraic variety of dimension $mn-\alpha$, where $\alpha = m-n+1$, we know that the hyper-volume (i.e. the Lebesgue measure in $\mathbb{R}^{m\times n}$) of the set $\left\{A\in\mathbb{R}^{m\times n}\text{ }|\text{ }\left\|A\right\|_F<C,\sigma_{\min}(A) < s\right\}$ approaches zero with rate $O(s^\alpha)$ as $s\rightarrow0$, and by boundedness of $\text{pdf}[A]$ this implies that $P[\sigma_{\min} < s] = O(s^\alpha)$.

Thus,
\lqq E\left[||(A^TA)^{-1}||_2^p\right] \leq \frac1{r^{2p}} + \int_0^r s^{-2p} O\left(s^{m-n}\right) ds, \rqq
which converges for $m-n-2p+1 > 0$.
\end{proof}

The lemma above implies that, if $\uhat J$ and its pdf are bounded, then using $B^{-1} = G^TG$, where $G=\sum_{i=1}^Na_i \uhat J_i$, for some sequence $a_i$, we may only guarantee a finite $p$-th moment for $p < \frac{m-n+1}2$, regardless of $N$.

However, if one uses $B^{-1} = R^{-1}$, where $R = \frac1N\sum_{i=1}^N \uhat J_i^T\uhat J_i$, we may write $R=A^TA$, where $A=\frac1{\sqrt{N}}\left[\begin{smallmatrix}\uhat J_1 \\ ... \\ \uhat J_N \end{smallmatrix}\right] \in \mathbb{R}^{Nm\times n}$, and therefore a finite $p$-th moment can be guaranteed for $p < \frac{mN-n+1}2$. This is also valid for any linear combination of the form $B^{-1} = (aG^TG + bR)^{-1}$, with $a,b>0$, which yields a finite $p$-th moment for $N > \frac{2p+n-1}{m}$. This suggests that, when employing Equation~\ref{eq:sgn} to compute $B_t$, there should exist some $t$ after which we would have enough finite moments for convergence.

\section{Conclusion}

This work presented stochastic optimization schemes targeted at MCLS problems. We first introduced the concept of increasing precision (IP), which asymptotically outperforms the other ``constant'' precision methods such as SGD, and then proposed a hybrid approach that substantially improves pre-asymptotic performance. Finally, we also showed how the use of Gauss-Newton can be highly beneficial on MCLS, outperforming Quasi-Newton approaches and covariance-preconditioning methods such as AdaGrad and Adam in our experimental analysis.

It remains as future work to provide a more complete theoretical analysis of the convergence of the hybrid methods, as well as exploring the generalizations of Section~\ref{sec:limitations}, such as providing more generic solutions targeting the cases when the cost assumptions of Section~\ref{sec:assumptions} do not hold, or the case of biased but consistent estimators. Also left for future work is a more extensive comparison of different ways to compute a Gauss-Newton estimator $B_t$ rather than Equation~\ref{eq:sgn}.

\appendix

\section{Definitions of the problems used in experiments}
\label{sec:problems}
\begin{itemize}
\item \textbf{Problem \#1}: The problem dimensions $m,n$ (as in $Q:\mathbb{R}^n\rightarrow\mathbb{R}^m$) are $m=3$, $n=2$, with $\uhat Q(x) = \left[\begin{smallmatrix} L(y) -0.5 \\ L(y-1) -0.5 \\ L(2y-1) -0.2 \end{smallmatrix}\right]$, where $y = x_1 + x_2G$, $G$ is a Gaussian variable with zero mean and unit variance, and $L(y) = \frac{|y| - |y-1| + 1}2$. We use $\uhat J(x) = \left[\begin{smallmatrix} L^\prime(y) \\ L^\prime(y-1) \\ 2L^\prime(2y-1) \end{smallmatrix}\right]\left[\begin{smallmatrix} 1 \\ G \end{smallmatrix}\right]^T$, where ``$\text{ }^\prime$'' indicates derivative. Constraints: $x_1 \in [0,2]$, $x_2 \in [1,3]$. $x^*$ and $S$ were computed numerically for this problem.
\item \textbf{Problem \#2}: The problem dimensions $m,n$ are arbitrary. $\uhat Q(x) = ABu-y$, where $y_i = i^2$, $A\in\mathbb{R}^{m\times n}$ satisfying $A_{ij} = e^{-\frac12\left(nm(i/m-j/n)^2\right)}$, $B\in\mathbb{R}^{n\times n}$ is a diagonal matrix whose entries are i.i.d. random variables uniformly distributed in $[0.15,0.85]$, and $u_i = \sinh(x_i)$, $u,x\in\mathbb{R}^n$. We use $\uhat J(x) = ABu^\prime$, where ``$\text{ }^\prime$'' indicates derivative with respect to $x$.
We constrain $x_i \in [\sinh^{-1}(u^*_i-1), \sinh^{-1}(u^*_i+1)]$, where $u^* = 2(A^TA)^{-1}A^Ty$.
\end{itemize}

\subsection*{Initial value} For Problem \#1, the initial value is always $x_1=(2,1)^T$, for all methods. For Problem \#2, $x_1$ is randomly selected, but different methods in a same figure use all the same initial value.

\subsection*{Enforcing constraints}
Note that the problems used in our experimental analysis are all constrained, although the methods we evaluate in this work were all designed for unconstrained problems. In our implementation, we enforce constraints by projecting an iterate $x_k$ back to the closest feasible point whenever it violates the problem constraints.

\subsection*{Numerical solution of Problem \#1}

For Problem \#1, $x^*$ was computed numerically. We ran 8 parallel instances of the averaged SGD algorithm with $\alpha=.66$, and $N=10$ samples per iteration, for $4\times10^6$ iterations, starting at $x_1=(1,1)^T$, and preconditioned with $D=(\widehat{J^TJ})^{-1}$, where $\widehat{J^TJ}$ is an unbiased estimator of $J^TJ$ at $x_1$, computed with 1000 samples.

The average output of the 8 instances was of $x^* = \smallmat{0.660877 \\ 2.28548}$, and the variance matrix of the mean (of the 8 instances) was $\smallmat{3.72107\times10^{-8} & -7.943\times10^{-8} \\ -7.943\times10^{-8} & 3.12237\times10^{-7}}$.

\section{Variance of gradient estimators}

\begin{lemma}
\label{lemma:var1}
Let $(\uhat Q, \uhat J)$ and $(\uhat Q^\prime, \uhat J^\prime)$ be two i.i.d. pairs of unbiased estimators of a vector $Q$ and matrix $J$ (i.e. with $(\uhat Q, \uhat J) \pperp (\uhat Q^\prime, \uhat J^\prime)$). Then the variance of the estimator $\uhat J^T\uhat Q^\prime$ is greater than or equal to that of $\frac{\uhat J^T\uhat Q^\prime + \left.\uhat J^\prime\right.^T\uhat Q}2$.
\end{lemma}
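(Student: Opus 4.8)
The plan is to read ``variance'' as the covariance matrix and ``greater than or equal to'' as the Loewner partial order $\succeq$ (i.e. a positive semidefinite difference). Write $X = \uhat J^T\uhat Q^\prime$ and $X^\prime = \left.\uhat J^\prime\right.^T\uhat Q$, so that the symmetrized estimator is $\bar X = (X+X^\prime)/2$. First I would record two structural facts that follow immediately from the i.i.d. hypothesis $(\uhat Q,\uhat J)\pperp(\uhat Q^\prime,\uhat J^\prime)$. First, both $X$ and $X^\prime$ are unbiased with common mean $\mu = E[\uhat J]^TE[\uhat Q] = J^TQ$, using independence within each product (exactly as in the unbiasedness argument of Section~\ref{sec:grad}). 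Second, interchanging the labels of the primed and unprimed pairs is a measure-preserving symmetry of the joint law, so $X$ and $X^\prime$ are identically distributed and in particular share a common covariance matrix $V \triangleq \text{Var}[X]=\text{Var}[X^\prime]$.

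Next I would expand the covariance of the average in terms of $V$ and the cross-covariance $C \triangleq \text{Cov}[X,X^\prime] = E[(X-\mu)(X^\prime-\mu)^T]$. Since $\bar X - \mu = \tfrac12\big((X-\mu)+(X^\prime-\mu)\big)$, bilinearity gives $\text{Var}[\bar X] = \tfrac14\big(\text{Var}[X]+\text{Var}[X^\prime]+C+C^T\big) = \tfrac14\big(2V + C + C^T\big)$. Subtracting this from $V$ and using $\text{Var}[X-X^\prime] = 2V - C - C^T$ yields the single identity that carries the whole argument:
\lqq V - \text{Var}[\bar X] = \tfrac14\left(2V - C - C^T\right) = \tfrac14\,\text{Var}[X-X^\prime]. \rqq
Because any covariance matrix is positive semidefinite, $\text{Var}[X-X^\prime]\succeq 0$, and therefore $\text{Var}[X]\succeq\text{Var}[\bar X]$, which is the claim.

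The only real content is the displayed identity, which combines the label-swap symmetry (needed to equate the two diagonal variances so that the cross terms are all that survive) with the trivial positive-semidefiniteness of $\text{Var}[X-X^\prime]$; there is no genuine obstacle beyond keeping track that ``variance'' is matrix-valued and that the comparison is in the Loewner order. I would only need to assume the relevant second moments of $\uhat Q$ and $\uhat J$ are finite, so that all covariance matrices appearing above are well defined. I would also note in passing that this same computation generalizes verbatim to the $N$-sample estimator of Equation~\ref{eq:gradprec}, which is the route taken in Lemma~\ref{lemma:var2}.
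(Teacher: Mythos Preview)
Your proof is correct and essentially the same as the paper's: both use the label-swap symmetry to identify $\text{Var}[X]=\text{Var}[X^\prime]$ and then reduce the difference $\text{Var}[X]-\text{Var}[\bar X]$ to $\tfrac14\,\text{Var}[X-X^\prime]\succeq 0$. The paper simply carries out the same algebra at the level of second moments $E[XX^T]$ rather than covariances, arriving at the identical expression $E\big[\tfrac12(X-X^\prime)\cdot\tfrac12(X-X^\prime)^T\big]$.
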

\begin{proof}
\lqq \text{Var}[\uhat J^T\uhat Q^\prime] - \text{Var}\left[\frac{\uhat J^T\uhat Q^\prime + \left.\uhat J^\prime\right.^T\uhat Q}2\right] =  \rqq
\lqq E\left[\left(\uhat J^T\uhat Q^\prime\right)\left(\uhat J^T\uhat Q^\prime\right)^T\right] - E\left[\left(\frac{\uhat J^T\uhat Q^\prime + \left.\uhat J^\prime\right.^T\uhat Q}2\right)\left(\frac{\uhat J^T\uhat Q^\prime + \left.\uhat J^\prime\right.^T\uhat Q}2\right)^T\right] =  \rqq
\lqq E\left[\frac{\uhat J^T\uhat Q^\prime\left.\uhat Q^\prime\right.^T\uhat J + \left.\uhat J^\prime\right.^T\uhat Q\uhat Q^T\uhat J^\prime}2\right] - E\left[\left(\frac{\uhat J^T\uhat Q^\prime + \left.\uhat J^\prime\right.^T\uhat Q}2\right)\left(\frac{\uhat J^T\uhat Q^\prime + \left.\uhat J^\prime\right.^T\uhat Q}2\right)^T\right] =  \rqq
\lqq E\left[\left(\frac{\uhat J^T\uhat Q^\prime - \left.\uhat J^\prime\right.^T\uhat Q}2\right)\left(\frac{\uhat J^T\uhat Q^\prime - \left.\uhat J^\prime\right.^T\uhat Q}2\right)^T\right] \succeq 0. \rqq
\end{proof}

\begin{lemma}
\label{lemma:var2}
Given $N$ i.i.d. pairs $(\uhat Q^{(i)}, \uhat J^{(i)})$, $i\in\{1,...,N\}$, with $E[\uhat Q^{(i)}] = Q$ and $E[\uhat J^{(i)}] = J$, then 
\lqq \text{Var}\left[\frac{\sum_{i\neq j}\left.\uhat J^{(i)}\right.^T\uhat Q^{(j)}}{N(N-1)}\right] = \frac{\Sigma_A^2}{N} + \frac{\Sigma_B^2}{N(N-1)}, \rqq
where $\Sigma_A^2$ and $\Sigma_B^2$ are positive semidefinite matrices.
\end{lemma}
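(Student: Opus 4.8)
The plan is to reduce the variance of this $U$-statistic-type estimator to a handful of elementary second moments by centering and then carefully bookkeeping the index overlaps. First I would write $\uhat J^{(i)} = J + U_i$ and $\uhat Q^{(i)} = Q + V_i$, where $U_i$ and $V_i$ are mean-zero, the pairs $(U_i, V_i)$ are i.i.d.\ across $i$, and $U_i, V_i$ carrying the same index may be correlated. Expanding each summand as $(\uhat J^{(i)})^T \uhat Q^{(j)} = J^TQ + J^TV_j + U_i^TQ + U_i^TV_j$ and resumming the ordered double sum (each $i$ is paired with exactly $N-1$ values of $j$), the terms that are linear in the noise collapse into a single i.i.d.\ sum, giving the centered decomposition $\uhat g - J^TQ = \frac1N \sum_{i=1}^N s_i + \frac1{N(N-1)} \sum_{i\neq j} U_i^T V_j$, where $s_i := U_i^TQ + J^TV_i$.

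With this decomposition the variance splits into three pieces. I would first show that the cross term vanishes: writing $S = \sum_i s_i$ and $T = \sum_{i\neq j}U_i^TV_j$, each contribution $E[s_k (U_i^TV_j)^T]$ is zero, because when $k \notin \{i,j\}$ the shared factor is independent with zero mean, and when $k = i$ or $k = j$ one of $V_j$ or $U_i$ still appears linearly and independently with zero mean. Hence $\text{Var}[\uhat g] = \frac1{N^2}E[SS^T] + \frac1{N^2(N-1)^2}E[TT^T]$. The first piece is immediate, since $E[SS^T] = N\,E[s_1 s_1^T]$ by independence, so it contributes $\frac1N \Sigma_A^2$ with $\Sigma_A^2 := E[s_1 s_1^T]$, manifestly positive semidefinite as the covariance of a mean-zero vector.

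The core of the computation is $E[TT^T] = \sum_{i\neq j}\sum_{k\neq l} E[(U_i^TV_j)(U_k^TV_l)^T]$, which I would evaluate by classifying the summands according to the overlap pattern of $\{i,j\}$ and $\{k,l\}$. Because $U_i^TV_j$ has zero mean for $i \neq j$ and distinct indices are independent, every configuration with at most a single coincidence factors out a zero-mean factor and vanishes; only the ``identical'' pattern ($i=k,\ j=l$) and the ``swapped'' pattern ($i=l,\ j=k$) survive. Each occurs $N(N-1)$ times, yielding $E[TT^T] = N(N-1)(M_1 + M_2)$ with $M_1 = E[(U_i^TV_j)(U_i^TV_j)^T]$ and $M_2 = E[(U_i^TV_j)(U_j^TV_i)^T]$ for any fixed $i\neq j$; this piece then contributes $\frac1{N(N-1)}\Sigma_B^2$ with $\Sigma_B^2 := M_1 + M_2$.

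The main obstacle is proving that $\Sigma_B^2$ is positive semidefinite: $M_1$ is an honest second moment and is PSD, but $M_2$ is a crossed cross-moment with no such structure a priori. I expect to resolve this by a symmetrization argument exploiting the i.i.d.\ symmetry of the two pairs. Swapping $i\leftrightarrow j$ leaves the joint distribution invariant, which forces $M_2$ to be symmetric and $E[(U_j^TV_i)(U_j^TV_i)^T] = M_1$; expanding $E[(U_i^TV_j + U_j^TV_i)(U_i^TV_j + U_j^TV_i)^T]$ then gives exactly $2(M_1 + M_2)$. Thus $\Sigma_B^2 = \frac12 E[(U_i^TV_j + U_j^TV_i)(U_i^TV_j + U_j^TV_i)^T]$ is a genuine second moment, hence positive semidefinite, completing the identification $\text{Var}[\uhat g] = \frac{\Sigma_A^2}{N} + \frac{\Sigma_B^2}{N(N-1)}$.
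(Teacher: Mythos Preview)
Your proof is correct and reaches exactly the same identifications as the paper: $\Sigma_A^2 = \text{Var}[U_1^TQ + J^TV_1]$ (which equals the paper's $\text{Var}[J\uhat Q + \uhat J Q]$) and $\Sigma_B^2 = \tfrac12 E[(U_i^TV_j + U_j^TV_i)(U_i^TV_j + U_j^TV_i)^T]$ (the paper's $\tfrac12\text{Var}[\epsilon_J\epsilon_Q' + \epsilon_J'\epsilon_Q]$). The symmetrization step for positive semidefiniteness of $\Sigma_B^2$ is identical in both.

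The organizational difference is that you center \emph{first} and obtain the Hoeffding-type decomposition $\uhat g - J^TQ = \frac1N\sum_i s_i + \frac1{N(N-1)}\sum_{i\neq j}U_i^TV_j$, whereas the paper expands the raw variance as a sum over $(i,j,k,l)$ and classifies according to how many distinct indices appear (four ``three-distinct'' patterns feeding $\Sigma_A^2$, two ``two-distinct'' patterns feeding $\Sigma_B^2$), only introducing the centered quantities $\epsilon_J,\epsilon_Q$ at the end to simplify $\Sigma_B^2$. Your route is tidier: centering up front makes the cross term vanish and collapses the three-distinct-index contributions into the i.i.d.\ sum $\sum_i s_i$, so only the two double-coincidence patterns remain to be counted. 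The paper's route, in exchange, is slightly more direct in that it never needs to argue separately that the cross term $E[ST^T]$ vanishes.
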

\begin{proof}
Let us first write
\lqq \text{Var}\left[\frac{\sum_{i\neq j}\left.\uhat J^{(i)}\right.^T\uhat Q^{(j)}}{N(N-1)}\right] = \frac{\sum_{i\neq j}\sum_{k\neq l}E\left[\left.\uhat J^{(i)}\right.^T\uhat Q^{(j)}\left.\uhat Q^{(k)}\right.^T\uhat J^{(l)}-J^TQQ^TJ\right]}{N^2(N-1)^2}. \rqq

From now on, we omit the transpose ``$\text{ }^T$'' symbol for simplicity of notation, so the expression above writes as:
\lqq \frac{\sum_{i\neq j}\sum_{k\neq l}E\left[\uhat J^{(i)}\uhat Q^{(j)}\uhat Q^{(k)}\uhat J^{(l)}-JQQJ\right]}{N^2(N-1)^2}. \rqq

Note that the expectation term above is only nonzero when $i\neq j \neq k \neq l$ does not hold. In the summation above, there are four cases where these variables take altogether three distinct values (which is when either $i=k$, $i=l$, $j=k$, or $k=l$), each one occurring $N(N-1)(N-2)$ times, while there are two cases where they take two distinct values (when $i=k$ and $j=l$, or $i=l$ and $j=k$), each one occurring $N(N-1)$ times. Thus, the expression above writes as:
\lqq E\left[\frac{(N-2)\left(J\uhat Q\uhat QJ + J\uhat QQ\uhat J + \uhat JQ\uhat QJ + \uhat JQQ\uhat J - 4JQQJ\right)}{N(N-1)} + ... \right. \rqq
\lqq \left.\frac{\uhat J\uhat Q^\prime\uhat Q^\prime\uhat J + \uhat J\uhat Q^\prime\uhat Q\uhat J^\prime - 2JQQJ}{N(N-1)}\right] = \frac{\Sigma_A^2}{N} + \frac{\Sigma_B^2}{N(N-1)}, \rqq
with
\lqq
\Sigma_A^2 = E\left[J\uhat Q\uhat QJ + J\uhat QQ\uhat J + \uhat JQ\uhat QJ + \uhat JQQ\uhat J - 4JQQJ\right]
= \text{Var}[J\uhat Q + \uhat JQ], 
\rqq
and
\lqq \Sigma_B^2 = E\left[\uhat J\uhat Q^\prime\uhat Q^\prime\uhat J + \uhat J\uhat Q^\prime\uhat Q\uhat J^\prime - J\uhat Q\uhat QJ - J\uhat QQ\uhat J - \uhat JQ\uhat QJ - \uhat JQQ\uhat J + 2JQQJ\right]. \rqq

Let now $\epsilon_{J} = \uhat J-J$, $\epsilon_{Q} = \uhat Q-Q$, $\epsilon_{J}^\prime = \uhat J^\prime-J$, $\epsilon_{Q}^\prime = \uhat Q^\prime-Q$, and note that for two random variables $X,Y$ written in this notation we may write $E[\uhat{X}\uhat{X}] = XX + E[\epsilon_{X}\epsilon_{X}]$ and $E[\uhat{X}\uhat{Y}] = XY + E[\epsilon_{X}\epsilon_{Y}]$.

We can then simplify:
\lqq \Sigma_B^2 = E[\uhat{J}\uhat{Q}^\prime \uhat{Q}^\prime \uhat{J}+\uhat{J}^\prime \uhat{Q}\uhat{Q}^\prime \uhat{J}-J\uhat{Q}\uhat{Q}J-\uhat{J}QQ\uhat{J}-J\uhat{Q}Q\uhat{J}-\uhat{J}Q\uhat{Q}J+2JQQJ] =  \rqq
\lqq E[\uhat{J}(QQ+\epsilon_{Q}^\prime \epsilon_{Q}^\prime )\uhat{J}+\uhat{J}^\prime \uhat{Q}\uhat{Q}^\prime \uhat{J}-J\uhat{Q}\uhat{Q}J-\uhat{J}QQ\uhat{J}-J\uhat{Q}Q\uhat{J}-\uhat{J}Q\uhat{Q}J+2JQQJ] =  \rqq
\lqq E[\uhat{J}\epsilon_{Q}^\prime \epsilon_{Q}^\prime \uhat{J}+\uhat{J}^\prime \uhat{Q}\uhat{Q}^\prime \uhat{J}-J\uhat{Q}\uhat{Q}J-J\uhat{Q}Q\uhat{J}-\uhat{J}Q\uhat{Q}J+2JQQJ] =  \rqq
\lqq E[\uhat{J}\epsilon_{Q}^\prime \epsilon_{Q}^\prime \uhat{J}+\epsilon_{J}^\prime \uhat{Q}\epsilon_{Q}^\prime \uhat{J}-J\epsilon_{Q}\epsilon_{Q}J-\epsilon_{J}Q\epsilon_{Q}J] =  \rqq
\lqq E[\uhat{J}\epsilon_{Q}^\prime \epsilon_{Q}^\prime \uhat{J}+\epsilon_{J}^\prime \uhat{Q}\epsilon_{Q}^\prime \uhat{J}-J\epsilon_{Q}^\prime \epsilon_{Q}^\prime J-\epsilon_{J}^\prime Q\epsilon_{Q}^\prime J] =  \rqq
\lqq E[\epsilon_{J}\epsilon_{Q}^\prime \epsilon_{Q}^\prime \epsilon_{J}+\epsilon_{J}^\prime \epsilon_{Q}\epsilon_{Q}^\prime \epsilon_{J}] =  \rqq
\lqq \frac12E[\epsilon_{J}\epsilon_{Q}^\prime \epsilon_{Q}^\prime \epsilon_{J}+\epsilon_{J}^\prime \epsilon_{Q}\epsilon_{Q}^\prime \epsilon_{J}+\epsilon_{J}^\prime \epsilon_{Q}\epsilon_{Q}\epsilon_{J}^\prime +\epsilon_{J}\epsilon_{Q}^\prime \epsilon_{Q}\epsilon_{J}^\prime ] = \frac12\text{Var}[\epsilon_{J}\epsilon_{Q}^\prime +\epsilon_{J}^\prime \epsilon_{Q}]. \rqq

Since both $\Sigma_A^2$ and $\Sigma_B^2$ can be written as the variance of some expression, they must be positive semidefinite.
\end{proof}

\section{The connection between IP and the hybrid approach}
\label{sec:hybconn}
Starting from the IP update formula (Equations \ref{eq:gradprec} and \ref{eq:ip}), we note that we may write it as:
\begin{align}
x_{k+1} &= x_k - A_k\frac{\sum_{1\leq i\neq j \leq N_k} \left.\uhat J_k^{(i)}\right.^T\uhat Q_k^{(j)}}{N_k(N_k-1)} \nonumber \\
&= x_k - A_k\frac{\sum_{i=1}^{N_k}\left(\left(\sum_{j=1}^{i-1}\uhat J_k^{(j)}\right)^T\uhat Q_k^{(i)} + \left.\uhat J_k^{(i)}\right.^T\left(\sum_{j=1}^{i-1}\uhat Q_k^{(j)}\right)\right)}{N_k(N_k-1)}. \nonumber
\end{align}

Note now that we may break down the equation above in $N_k$ smaller updates of $x$, by defining $\tilde x_{k,1} = x_k$, $\tilde x_{k,N_k+1} = x_{k+1}$, and
\eq{\tilde x_{k,i+1} = \tilde x_{k,i} - A_k\frac{\left(\sum_{j=1}^{i-1}\uhat J_k^{(j)}\right)^T\uhat Q_k^{(i)} + \left.\uhat J_k^{(i)}\right.^T\left(\sum_{j=1}^{i-1}\uhat Q_k^{(j)}\right)}{N_k(N_k-1)}.\label{eq:hybrid2}}

The IP-SGD hybrid method then comes from two modifications of Equation~\ref{eq:hybrid2}. First, we compute the $J_k^{(i)},Q_k^{(i)}$ in function of $\tilde x_{k,i}$ instead of $\tilde x_{k,1}$. Secondly, instead of estimating the gradient in function of the previous $i-1$ samples, we use all previous $t-1$ (with $t = i+\sum_{l=1}^{k-1}N_l$) samples, however giving a higher weight to more recent samples, according to a predefined increasing sequence of positive numbers $q_t$. Renaming now $x_t := \tilde x_{k,i}$, the IP-SGD hybrid method (Equations \ref{eq:ip-sgd-hybrid} and \ref{eq:hybrid1}) is obtained.

\bibliographystyle{siamplain}
\bibliography{article}

\end{document}